\documentclass{amsart}
\usepackage{amssymb}
\usepackage[enableskew]{youngtab}
\usepackage[boxsize=21pt]{ytableau}
\usepackage[colorlinks=true, pdfstartview=FitV, linkcolor=blue, citecolor=blue, urlcolor=blue]{hyperref}
\usepackage{mathrsfs}

\theoremstyle{plain}
\newtheorem{thm}{Theorem}[section]
\newtheorem{lemma}[thm]{Lemma}

\newtheorem{prop}[thm]{Proposition}
\newtheorem{cor}[thm]{Corollary}
\theoremstyle{definition}
\newtheorem{dfn}[thm]{Definition}
\newtheorem{ex}[thm]{Example}
\newtheorem{remark}[thm]{Remark}

\numberwithin{equation}{section}

\usepackage{amsaddr}
\usepackage{amsmath,amscd}

\newcommand{\arxiv}[1]{\href{https://arxiv.org/abs/#1}{\texttt{arXiv:#1}}}
\newcommand{\textcir}[1]{\raisebox{-.13pt}{\textcircled{\raisebox{-0.36pt} {#1}}}}

\begin{document}
\title[$P$-Schur positive $P$-Grothendieck polynomials]{$P$-Schur positive $P$-Grothendieck polynomials}

\author[G.~Hawkes]{Graham Hawkes}
\address[G.~Hawkes]{Max-Planck-Institut f\"ur Mathematik, Vivatsgasse 7, 53111 Bonn, Germany}
\email{hawkes@math.ucdavis.edu}
\thanks{G. Hawkes is supported by the Max-Planck-Institut f\"ur Mathematik.}
\begin{abstract}
The symmetric Grothendieck polynomials generalize Schur polynomials and are Schur-positive by degree. Combinatorially this is manifested as the generalization of semistandard Young tableaux by set-valued tableaux. We define a (weak) symmetric $P$-Grothendieck polynomial which generalizes $P$-Schur polynomials in the same way. Combinatorially this is manifested as the generalization of shifted semistandard Young tableaux by a new type of tableaux which we call shifted multiset tableaux.
\end{abstract}
\keywords{Combinatorics [05], Grothendieck polynomials, $P$-Schur polynomials, multiset tableaux, RSK correspondence}
\maketitle

\ytableausetup{boxsize=0.8cm}

\section{Introduction}

\subsection{Overview}

In this paper we will be discussing certain families of polynomials in a set of indeterminants $x_1,\ldots,x_n$.  Each of these families will be indexed by either partitions or else strict partitions (the subset of partitions with no two parts equal).  Moreover, the polynomials  in each family are symmetric under permutations of the indeterminants $x_1,\ldots,x_n$.  The most basic of these families will be the Schur polynomials. These polynomials appear in many areas of algebra and combinatorics and have a number of important properties: they form a basis for the space of symmetric polynomials in  $x_1,\ldots,x_n$, which is graded by degree, they are self-dual under the Hall inner product, they describe the characters of polynomial representations of $GL(n, \mathbb{C})$, and any product of Schur polynomials can be expressed as a nonnegative integral  linear combination of other Schur polynomials. 

$P$-Schur polynomials, like Schur polynomials, are homogeneous of degree equal to the size of the indexing partition.  Since they are only defined for strict partitions they cannot form a basis for the space of symmetric polynomials, although they do form a linearly independent set. Also, like  Schur polynomials they describe the characters of certain representations. 
 Further, any product of $P$-Schur polynomials can be written as a nonnegative integral linear combination of other $P$-Schur polynomials.  Finally, we note that $P$-Schur polynomials themselves can be written  as  nonnegative integral  linear combinations of Schur polynomials.

Symmetric Grothendieck polynomials and weak symmetric Grothendieck polynomials are $K$-theoretical generalizations of Schur polynomials.  In particular, the lowest degree part of a (weak) symmetric  Grothendieck polynomial associated to a certain partition returns the Schur polynomial indexed by the same partition.  Like Schur and $P$-Schur polynomials, they form important linearly independent sets in the ring of symmetric polynomials from algebraic as well as geometric perspectives.  Although symmetric Grothendieck polynomials and their weak versions share many properties (e.g., they both can be written as nonnegative integral linear combinations of Schur polynomials),  an essential difference between the two is that whereas the former are bounded in degree (by for instance $n |\lambda|$ where $|\lambda|$ means the size of the partition to which the polynomial is associated) the latter are not (and are actually formal power series rather than polynomials, strictly speaking).  

We will have occasion to study the weak version in this paper, and will define new polynomials that complete the analogy ``Schur polynomials  are to weak symmetric Grothendieck polynomials as $P$-Schur polynomials are to $\ldots$"   in a way that makes sense on as many levels as possible.  These polynomials will be ``defined" in two ways (formally, one may take either to be the definition and the other as a consequence).  The first definition will be a determinantal definition which is in analogy to the classical definition of Schur polynomials. It can be seen from this definition that our ``polynomial" is really a formal power series, because, for instance, its degree is not bounded. The second definition will be a combinatorial one, given as a generating function over certain tableaux.  These tableaux  will be of shifted shape, contain both primed and unprimed entries, and allow more than one entry per box.  Our interest in these polynomials and their potential importance will be discussed in the next two subsections.

\subsection{Motivation}

On the most basic level our goal is to complete the following diagram of important (see next subsection) polynomials in a way that makes as much algebraic and combinatorial sense as possible:\\

\begin{center}
$\begin{CD}
\hbox{\textbf{\small{Schur polynomials}}} @> \hbox{\em{\footnotesize{Type A to Type B}}}>> \hbox{\textbf{\small{$P$-Schur polynomials}}} \\
@VV\hbox{\em{\footnotesize{K-theory}}} V@VV \hbox{\em{\footnotesize{K-theory}}}V\\
\hbox{\textbf{\small{Grothendieck polynomials}}} @> \hbox{\em{\footnotesize{Type A to Type B}}} >> \hbox{\textbf{\small{TBD}}}
\end{CD}$
\end{center}

\bigskip

Here we are actually referring to the weak symmetric version of Grothendieck polynomials.  In order to define the missing polynomials we take algebra as our starting point rather than geometry or combinatorics because, as we will see, there is more or less a unique (up to certain dualities) solution to the problem considered from this point of view.  We then work out a combinatorial model that fits this choice.  This amounts to completing the following additional diagram:\\
\begin{center}
$\begin{CD}
\hbox{\textbf{\small{Semistandard Young tableaux}}} @> \hbox{\em{\footnotesize{straight to shifted}}}>> \hbox{\textbf{\small{Semistandard shifted tableaux}}} \\
@VV\hbox{\em{\footnotesize{element to set}}} V@VV \hbox{\em{\footnotesize{element to set}}}V\\
\hbox{\textbf{\small{Multiset valued tableaux}}} @> \hbox{\em{\footnotesize{straight to shifted}}} >> \hbox{\textbf{\small{TBD}}}
\end{CD}$
\end{center}

\subsection{Background and Goals}

Symmetric Grothendieck polynomials and weak symmetric Grothendieck polynomials are families of nonhomogeneous symmetric polynomials indexed  by partitions. They are related to both the more general Grothendieck polynomials of Lascoux and Sch\"utzenberger~\cite{LS82,LS83} as well as the stable Grothendieck polynomials of Fomin and Kirillov~\cite{FK94,FK96}.  These relationships are explicated in places such as ~\cite{MPS18} and ~\cite{HS20}. For these reasons symmetric and weak symmetric Grothendieck polynomials are fundamental building blocks in the subject of nonhomogeneous symmetric functions.

Moreover, they are natural generalizations of Schur polynomials to nonhomogeneous symmetric polynomials:  The Schur polynomial $s_{\mu}$ can be defined as the determinant of a certain matrix, $M=\{M_{ij}\}$, divided by the Vandermonde determinant.  If each entry $M_{ij}$ of this matrix is multiplied by $(1+x_i)^{\mu_j}$ this process instead gives the symmetric Grothendieck polynomial; if divided by $(1-x_i)^{\mu_j}$ it instead gives the weak symmetric Grothendieck polynomial.  On the combinatorial side, the definitions of symmetric Grothendieck polynomials and weak symmetric Grothendieck polynomials are obtained by generalizing the concept of semistandard Young tableaux to set-valued tableaux ~\cite{Buch02} or multiset-valued tableaux  ~\cite{LP07}, respectively.  Interestingly, both symmetric Grothendieck polynomials and weak symmetric Grothendieck polynomials are in turn themselves Schur positive by degree (see ~\cite{MPS18} and ~\cite{HS20} for crystal interpretations of these facts).

Since the $P$-Schur function $P_{\mu}$ also has a determinantal definition  it is a natural question to ask whether these phenomena occur if we try to generalize $P$-Schur functions in the same manner (i.e., multiplying entries of the relevant matrix by $(1+x_i)^{\mu_j}$ or dividing them by $(1-x_i)^{\mu_j})$.  In this paper, we give an in depth analysis of the second case.  We call the resulting polynomial $\mathfrak{P}_{\mu}(\mathbf{x})$.  The most interesting results are that, in analogy to the symmetric and weak symmetric Grothendieck case:

\begin{enumerate}
\item{$\mathfrak{P}_{\mu}(\mathbf{x})$ is $P$-Schur positive by degree.}
\item{There is a combinatorial definition for $\mathfrak{P}_{\mu}(\mathbf{x})$  generalizing that of $P_{\mu}(\mathbf{x})$.}
\end{enumerate}
 The combinatorial definition employs a new type of (rather unexpected) tableaux, which we call shifted multiset tableaux.  We give the $P$-Schur expansion of $\mathfrak{P}_{\mu}(\mathbf{x})$  explicitly using certain ``maximal" shifted multiset tableaux.  

The properties above hint that our polynomials are natural analogs to weak symmetric Grothendieck polynomials and therefore likely to be useful in the analogous settings to many of the settings in which the latter are useful. Moreover, the new tableaux themselves should be of combinatorial interest.  Indeed the fact that $\mathfrak{P}_{\mu}(\mathbf{x})$ is Schur $P$-positive implies that  the set of shifted multiset tableaux can be given a certain combinatorial structure known as a crystal.  The paper leaves as an open problem the task of giving a  realization of these crystals through explicit  combinatorial rules on the set of  shifted multiset tableaux.  Similar endeavors have been undertaken successfully for other types of tableaux, in particular we mention  \cite{Gal17},  \cite{HPS17},  \cite{MPS18}, \cite{Hir19}, and \cite{HS20}.

In the paper we study both the well-known weak symmetric Grothendieck polynomial, $\mathfrak{J}_{\mu}(\mathbf{x})$, as well as the new $\mathfrak{P}_{\mu}(\mathbf{x})$ in order to highlight the similarities. Although the analysis of  $\mathfrak{J}_{\mu}(\mathbf{x})$ is not entirely new, we also introduce a new multiparameter $\mathbf{t}=t_1,\ldots, t_{\ell}$ deformation $\mathfrak{J}_{\mu}(\mathbf{x},\mathbf{t})$\footnote{ It should also be mentioned that the $t$-deformation, $\mathfrak{J}_{\mu}(\mathbf{x},\mathbf{t})$, has combinatorial significance in terms of the crystal structure on multiset tableaux as given in \cite{HS20}.  In particular, the combinatorial statistics on multiset tableaux that determine the exponents of $\mathbf{t}$  in the combinatorial definition of $\mathfrak{J}_{\mu}(\mathbf{x},\mathbf{t})$  are constant on each connected component of the crystal of \cite{HS20}.  One would expect in turn that a natural crystal structure on shifted multiset tableaux ought to preserve the combinatorial statistics on shifted multiset tableaux that determine the exponents of $\mathbf{t}$  in the combinatorial definition of $\mathfrak{P}_{\mu}(\mathbf{x},\mathbf{t})$ given in this paper.  This should provide clues on how to properly construct said crystal. }.  Finally we give a similar deformation, $\mathfrak{P}_{\mu}(\mathbf{x},\mathbf{t})$, of $\mathfrak{P}_{\mu}(\mathbf{x})$.  These deformations are given because they are natural both from the algebraic and combinatorial definitions of the polynomials, and clarify the relationship between the two definitions.

We note that similar generalizations of $P$-Schur polynomials such as in ~\cite{IN13} and ~\cite{HKPWZZ17} have been made, but are distinct from  $\mathfrak{P}_{\mu}(\mathbf{x})$ (for example, they are not $P$-Schur positive and are not constructed by  generalizing the determinantal  definition of the $P$-Schur function).

\section{Lemma}

We begin with a basic lemma about how to multiply symmetric polynomials by a sequence of homogeneous symmetric polynomials in a weakly increasing number of variables.

Let $\mu=(\mu_1,\ldots,\mu_n)$ be a partition into distinct parts of some positive integer and fix integers $n \geq c_{\ell} \geq \cdots \geq c_1$.  Now consider $\mu$ as being represented by its Young diagram.  In what follows we will successively append boxes to the right side of this diagram in $\ell$ steps.  In what follows we consider a list of $\ell$ nonnegative integers, $\mathbf{T}=T_{1},\ldots,T_{\ell}$.   In step $h$, $T_h$ boxes will be appended and these boxes must be appended in rows at or above row $c_h$  (note that this process may result in the diagram ceasing to be a partition if a row becomes longer than the row above it, in this case we refer to the shape using the more general term ``composition").   More precisely, define a $\mathbf{T}$-extension of $\mu$ to be a sequence of diagrams, $\lambda=\lambda^{\ell} \supseteq \cdots \supseteq \lambda^1 \supseteq \lambda^0=\mu$ such that $|\lambda^h|-|\lambda^{h-1}|=T_h$ and $\lambda^h_k=\lambda^{h-1}_k$ for $k>c_h$ for all $h \in [1,\ell]$. A $\mathbf{T}$-extension of $\mu$ is called \emph{good} if $\lambda^h_k < \lambda^{h-1}_{k-1}$ for $k \in [2,c_h]$ for all $h \in [1,\ell]$. A $\mathbf{T}$-extension which is not \emph{good} is called \emph{bad}. In particular, every diagram in a \emph{good} $\mathbf{T}$-extension is a partition.

\begin{ex}

Let $\mu=(4,3,2,1)$ with $(c_1,c_2,c_3,c_4)=(2,3,3,4)$  and $\mathbf{T}=(4,6,4,7)$.  
\begin{eqnarray*}
\ytableausetup{notabloids}
\begin{ytableau}
 \phantom{1} &  &  & & *(green) \phantom{1} & *(green)  \phantom{1} &*(yellow)  \phantom{1} &*(yellow)  \phantom{1} &*(red)  \phantom{1} \\
 \phantom{1}&  & & *(green)  \phantom{1} & *(green)  \phantom{1} &*(orange)  \phantom{1}  &*(orange)  \phantom{1}  &*(orange)  \phantom{1} &*(red)  \phantom{1} \\
   \phantom{1} & & *(yellow)  \phantom{1}  & *(yellow)  \phantom{1} & *(yellow)  \phantom{1}  & *(yellow)  \phantom{1} & *(orange)  \phantom{1} &*(red)  \phantom{1}  \\
 \phantom{1} &  *(red)  \phantom{1} &*(red)  \phantom{1} & *(red)  \phantom{1} & *(red)  \phantom{1} \\
\end{ytableau}
\end{eqnarray*}
Boxes are added in the  steps below to form  the $\mathbf{T}$-extension shown above. 
\begin{enumerate}
\item $T_1=4$ (green) boxes  added on or above row $c_1=2$ to form $\lambda^1=(6,5,2,1)$.
\item  $T_2=6$ (yellow) boxes  added on or above row $c_2=3$ to form $\lambda^2=(8,5,6,1)$.
\item $T_3=4$ (orange) boxes  added on or above row $c_3=3$ to form $\lambda^3=(8,8,7,1)$.
\item  $T_4=7$ (red) boxes  added on or above row $c_4=4$ to form $\lambda^4=(9,9,8,5)$.
\end{enumerate}
This $\mathbf{T}$-extension is bad because, for instance, we have $\lambda_3^3=7 \geq 5 = \lambda_2^2$.
\end{ex}

\begin{lemma}\label{hmult}
\begin{eqnarray*}
\sum_{\sigma \in S_n} { \mathbf{sgn}  (\sigma)}h_{T_{\ell}}(x_{\sigma_1},\ldots, x_{\sigma_{c_{\ell}}}) \cdots h_{T_1}(x_{\sigma_1},\ldots, x_{\sigma_{c_1}})x_{\sigma_1}^{\mu_1} \cdots x_{\sigma_n}^{\mu_n}\\
=\sum_{\lambda=\lambda^{\ell} \supseteq \cdots \supseteq \lambda^1 \supseteq \lambda^0=\mu}
\left(\sum_{\sigma \in S_n}  \mathbf{sgn}  (\sigma)x_{\sigma_1}^{\lambda_1} \cdots x_{\sigma_n}^{\lambda_n}\right)
\end{eqnarray*}
where the sum is over all good $\mathbf{T}$-extensions.
\end{lemma}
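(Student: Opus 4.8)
The plan is to expand each complete homogeneous symmetric polynomial into monomials, rewrite the left‑hand side as a single sum of alternants indexed by \emph{all} $\mathbf{T}$-extensions of $\mu$, and then cancel the contributions of the bad ones by a sign‑reversing involution (the case $n=c_\ell=\cdots=c_1$ being the classical identity). For a length‑$n$ composition $\nu$ write $A_\nu:=\sum_{\sigma\in S_n}sgn(\sigma)\,x_{\sigma_1}^{\nu_1}\cdots x_{\sigma_n}^{\nu_n}$. The elementary input is that $A$ is alternating in the entries of $\nu$: interchanging $\nu_a,\nu_b$ and simultaneously reindexing the sum by $\sigma\mapsto\sigma\cdot(a\,b)$ gives $A_{(a\,b)\nu}=-A_\nu$, so $A_\nu=0$ whenever $\nu$ has a repeated entry.

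First I would write $h_{T_h}(x_{\sigma_1},\ldots,x_{\sigma_{c_h}})=\sum_{\beta^h}x_{\sigma_1}^{\beta^h_1}\cdots x_{\sigma_{c_h}}^{\beta^h_{c_h}}$, the sum over compositions $\beta^h$ of $T_h$ into $c_h$ nonnegative parts (padded by zeros to length $n$). Multiplying out, collecting exponents, and performing the $\sigma$-sum last turns the left‑hand side into $\sum_{(\beta^\ell,\ldots,\beta^1)}A_{\mu+\beta^1+\cdots+\beta^\ell}$. Since the tuples $(\beta^\ell,\ldots,\beta^1)$ are in bijection with $\mathbf{T}$-extensions of $\mu$ via $\lambda^h=\mu+\beta^1+\cdots+\beta^h$, the left‑hand side equals $\sum_\Lambda A_{\lambda^\ell}$ over all $\mathbf{T}$-extensions $\Lambda=(\lambda^\ell\supseteq\cdots\supseteq\lambda^0=\mu)$. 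As the right‑hand side is exactly this sum restricted to good extensions, it suffices to prove $\sum_{\Lambda\text{ bad}}A_{\lambda^\ell}=0$.

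For this I would construct a sign‑reversing involution $\Lambda\mapsto\widetilde\Lambda$ on bad $\mathbf{T}$-extensions. Given a bad $\Lambda$, let $h$ be the \emph{smallest} step at which goodness fails and $k$ the \emph{largest} index in $\{2,\ldots,c_h\}$ with $\lambda^h_k\ge\lambda^{h-1}_{k-1}$; since steps $1,\ldots,h-1$ are good, the truncation $\lambda^{h-1}\supseteq\cdots\supseteq\lambda^0=\mu$ is a good extension, so (as noted just before the lemma) $\lambda^0,\ldots,\lambda^{h-1}$ are partitions. Let $s$ denote the interchange of coordinates $k-1$ and $k$, and put $\widetilde\lambda^j=\lambda^j$ for $j<h$ and $\widetilde\lambda^j=s\lambda^j$ for $j\ge h$. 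I would then verify, in turn: (1) $\widetilde\Lambda$ is again a $\mathbf{T}$-extension — the only containment needing care is $\widetilde\lambda^{h-1}\subseteq\widetilde\lambda^h$, which in coordinate $k-1$ is precisely the failure $\lambda^{h-1}_{k-1}\le\lambda^h_k$ at $(h,k)$ and in coordinate $k$ reads $\lambda^{h-1}_k\le\lambda^{h-1}_{k-1}\le\lambda^h_{k-1}$ because $\lambda^{h-1}$ is a partition, while the flag conditions persist since $k-1,k\le c_h\le c_j$ for all $j\ge h$ and the degree conditions are immediate; (2) $\widetilde\Lambda$ is still bad with the \emph{same} distinguished pair $(h,k)$ — steps below $h$ are untouched, step $h$ of $\widetilde\Lambda$ fails at $k$ because $\widetilde\lambda^h_k=\lambda^h_{k-1}\ge\lambda^{h-1}_{k-1}=\widetilde\lambda^{h-1}_{k-1}$, and maximality of $k$ forces positions $k+1,\ldots,c_h$ of step $h$ to stay good; (3) hence the construction applied to $\widetilde\Lambda$ again selects $(h,k)$ and returns $\Lambda$, so $\Lambda\mapsto\widetilde\Lambda$ is an involution; (4) a fixed point has $\lambda^\ell_{k-1}=\lambda^\ell_k$, whence $A_{\lambda^\ell}=0$, while for $\widetilde\Lambda\ne\Lambda$ one has $A_{\widetilde\lambda^\ell}=A_{s\lambda^\ell}=-A_{\lambda^\ell}$. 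Summing over bad extensions gives $\sum_{\Lambda\text{ bad}}A_{\lambda^\ell}=0$, completing the proof.

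The monomial expansion, the bijection with $\mathbf{T}$-extensions, and part (1) are routine bookkeeping. The crux — the point I expect to require the most care — is choosing the distinguished pair correctly: one must take the smallest violated step $h$ (so that $\lambda^{h-1}$ is a partition, which is exactly what makes $\widetilde\Lambda$ a legitimate extension) and the largest violated position $k$ within that step (so that $s$ cannot move the violation to a smaller coordinate and change which pair is distinguished, which would destroy self‑inverseness); and $s$ must be propagated through all of $\lambda^h,\ldots,\lambda^\ell$, both to keep the chain of containments intact and to produce the sign change in $A_{\lambda^\ell}$.
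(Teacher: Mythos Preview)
Your proof is correct and follows the same overall strategy as the paper: expand the $h_{T_j}$ into monomials to write the left-hand side as $\sum_{\Lambda} A_{\lambda^\ell}$ over all $\mathbf{T}$-extensions, then kill the bad ones by a sign-reversing involution that permutes two coordinates of $\lambda^h,\ldots,\lambda^\ell$ starting at the first violated step.

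The one genuine difference is in the choice of involution. The paper takes the \emph{minimal} bad index $k$ at step $i$ and then a further minimal $j<k$ with $\lambda^i_k\ge\lambda^{i-1}_j$, swapping coordinates $j$ and $k$; it also phrases everything on pairs $(\sigma,\Lambda)$ rather than on alternants. You instead take the \emph{maximal} bad index $k$ and always swap the adjacent pair $(k-1,k)$, working directly with $A_\nu$. Your version is a bit leaner --- one fewer choice to track, and the verification that the distinguished pair is stable under the involution is shorter because positions $>k$ are untouched by an adjacent swap --- while the paper's version avoids having to discuss fixed points (its swap always produces a distinct pair since $\lambda^{i-1}_j>\lambda^{i-1}_k$ forces $\bar\lambda^i_j\ne\bar\lambda^i_k$, whereas your adjacent swap can fix $\lambda^\ell$). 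Both are valid implementations of the same idea.
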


\begin{proof}
It suffices to show that
\begin{eqnarray*}
\sum_{\lambda=\lambda^{\ell} \supseteq \cdots \supseteq \lambda^1 \supseteq \lambda^0=\mu}
\left(\sum_{\sigma \in S_n}  \mathbf{sgn}  (\sigma)x_{\sigma_1}^{\lambda_1} \cdots x_{\sigma_n}^{\lambda_n}\right)=0
\end{eqnarray*}
where the sum is over all bad $\mathbf{T}$-extensions.  Indeed, consider the summand corresponding to a fixed $\sigma$  in the top line of the equation of Lemma \ref{hmult}. When the terms of this single summand are multiplied out, each resulting monomial  specifies a $\mathbf{T}$-extension of $\mu$ by considering this monomial as being formed by starting with $x_{\sigma_1}^{\mu_1} \cdots x_{\sigma_n}^{\mu_n}$ and then multiplying by some monomial appearing in $h_{T_1}(x_{\sigma_1},\ldots, x_{\sigma_{c_1}})$ and then multiplying by some monomial appearing in $h_{T_2}(x_{\sigma_1},\ldots, x_{\sigma_{c_2}})$, et cetera.  Moreover, each $\mathbf{T}$-extension of $\mu$ is formed in this way exactly once.   Since this is simultaneously true for all $\sigma \in S_n$, Lemma \ref{hmult} is true if we take the outer sum in the second line over \emph{all} $\mathbf{T}$-extensions.  If we verify the equation at the beginning of this paragraph, then we can replace \emph{all} with \emph{all good}.

 It suffices to find a sign-changing involution, $\iota$, on the set of pairs of the form $(\sigma, \Lambda)$ where $\sigma \in S_n$, $\Lambda$ is a bad $\mathbf{T}$-extension and the sign of the pair is the sign of the permutation $\sigma$, such that $\iota$ has the following property: If $\iota(\sigma,\Lambda)=(\bar{\vphantom{<}\sigma},\bar{\Lambda})$ where $\lambda$ is the largest composition of $\Lambda$ and $\bar{\lambda}$ is the largest composition of $\bar{\Lambda}$ then $\bar{\lambda}_{(\bar{\vphantom{<}\sigma}^{-1}(p))}=\lambda_{(\sigma^{-1}(p))}$ for all $p \in [1,n]$.

Define $\iota(\sigma,\Lambda)$ as follows: Suppose $\Lambda$ is the bad $\mathbf{T}$-extension $\lambda=\lambda^{\ell} \supseteq \cdots \supseteq \lambda^1 \supseteq \lambda^0=\mu$. Choose $i$ minimal such that there exists some $k \in [2,c_i]$ such that $\lambda^i_k \geq \lambda^{i-1}_{k-1}$. Choose the minimal such $k$, and then choose the minimal $j \in [1,k)$ such that $\lambda^i_k \geq \lambda^{i-1}_{j}$. Define $\bar{\vphantom{<}\sigma}(m)=\sigma(m)$ for $m \notin \{j,k\}$, $\bar{\vphantom{<}\sigma}(j)=\sigma(k)$, and $\bar{\vphantom{<}\sigma}(k)=\sigma(j)$. Next, for $h<i$ define $\bar{\lambda}^h=\lambda^h$. For $h \geq i$ define $\bar{\lambda}^h_m=\lambda^h_m$ for $m \notin \{j,k\}$, $\bar{\lambda}^h_j=\lambda^h_k$, and $\bar{\lambda}^h_k=\lambda^h_j$. Set $\iota(\sigma,\Lambda)=(\bar{\vphantom{<}\sigma},\bar{\Lambda})$ where $\bar{\Lambda}$ is the bad $\mathbf{T}$-extension $\bar{\lambda}=\bar{\lambda}^{\ell} \supseteq \cdots \supseteq \bar{\lambda}^1 \supseteq \bar{\lambda}^0=\mu$. (That the $\supseteq$ are correct, and that $\bar{\Lambda}$ is a bad $\mathbf{T}$-extension is proved below).

Note the following properties of $\iota$.

\begin{enumerate}
\item{$\iota(\sigma,\Lambda)$ has the opposite sign to $(\sigma,\Lambda)$.}
\item{$\bar{\lambda}_{(\bar{\vphantom{<}\sigma}^{-1}(p))}=\lambda_{(\sigma^{-1}(p))}$ for all $p \in [1,n]$.}
\item{$\bar{\Lambda}$ is a $\mathbf{T}$-extension.
\begin{itemize}
\item{That $|\bar{\lambda}^h|-|\bar{\lambda}^{h-1}|=T_h$ is immediate.}
\item{Suppose that $m>c_h$, we wish to check that $\bar{\lambda}^h_m=\bar{\lambda}^{h-1}_m$. Now if $m \in \{j,k\}$ and $h \geq i$ we have $j,k \leq c_i \leq c_h$ so the condition $m>c_h$ is impossible to attain.
Thus we may assume that $m \notin \{j,k\}$ or $h < i$ in which case we have $\lambda^h_m=\bar{\lambda}^h_m$ and $\lambda^{h-1}_m=\bar{\lambda}^{h-1}_m$ so that  $\lambda^h_m=\lambda^{h-1}_m$ implies  $\bar{\lambda}^h_m=\bar{\lambda}^{h-1}_m$. }
\item{Next, it is clear that $\bar{\lambda}^h \supseteq \bar{\lambda}^{h-1}$ if $h \neq i$ and also that $\bar{\lambda}^h_m>\bar{\lambda}^{h-1}_m$ for $m \notin \{j,k\}$. We need only check that $\bar{\lambda}^i_j \geq \bar{\lambda}^{i-1}_j$ and $\bar{\lambda}^i_k \geq \bar{\lambda}^{i-1}_k$. The first is equivalent to saying that $\lambda^i_k \geq \lambda^{i-1}_j$ which is true by the choice of $j$ and $k$. The second is equivalent to saying that $\lambda^i_j \geq \lambda^{i-1}_k$ but $\lambda^i_j \geq \lambda^{i-1}_j$ since $\lambda^i \supseteq \lambda^{i-1}$ and $\lambda^{i-1}_j \geq \lambda^{i-1}_k$ by minimality of $i$.}
\end{itemize}}
\item{$\bar{\Lambda}$ is a bad $\mathbf{T}$-extension. Indeed, $\bar{\lambda}^i_k=\lambda^i_j \geq \lambda^{i-1}_j=\bar{\lambda}^{i-1}_j$.}
\item{ $\iota^2(\sigma,\Lambda)=(\sigma,\Lambda)$. It is clear from the definitions that this is true as long as the values of $i,k,j$ chosen when applying $\iota$ to $(\sigma, \Lambda)$ are the same as those (say $\bar{i},\bar{k},\bar{j}$) chosen when applying $\iota$ to $(\bar{\vphantom{<}\sigma},\bar{\Lambda})$. Clearly $\bar{i} \geq i$, and, by the step above, $\bar{i} \leq i$, so $\bar{i}=i$. If $j \neq \bar{k} < k$ then $\lambda^i_{\bar{k}}=\bar{\lambda}^i_{\bar{k}} \geq \bar{\lambda}^{i-1}_{\bar{k}-1}=\lambda^{i-1}_{\bar{k}-1}$, contradicting the minimality of $k$. If $\bar{k}=j$ then $\lambda^i_k=\bar{\lambda}^i_j\geq \bar{\lambda}^{i-1}_{j-1}=\lambda^{i-1}_{j-1}$, contradicting the minimality of $j$. Since the step above implies $\bar{k} \leq k$ this means $\bar{k}=k$. Finally, if $\bar{j} < j$ then $\lambda^i_j=\bar{\lambda}^i_k \geq \bar{\lambda}^{i-1}_{\bar{j}}=\lambda^{i-1}_{\bar{j}}$, contradicting the minimality of $k$. Again, the step above means $\bar{j} \leq j$ so together we get $\bar{j}=j$.}
\end{enumerate}

This shows that $\iota$ is a well defined sign-changing involution with the desired property, proving the lemma.
\end{proof}


\section{$\mathfrak{J}_{\mu}(\mathbf{x},\mathbf{t})$ and multiset tableaux}

\subsection{Algebraic Definition of $\mathfrak{J}_{\mu}(\mathbf{x},\mathbf{t})$}

\emph{We will always work in $n$ variables and will set $V= \prod\limits_{i<j} (x_i-x_j)$. In general we will define a symmetric polynomial $f$ by defining the value of the skew-symmetric polynomial $V*f$ (where $*$ simply denotes multiplication).}

For a partition $\mu$ of $n$ parts (some may equal $0$), the weak symmetric Grothendieck polynomial in $n$ variables is defined by:

\begin{eqnarray*}
V*\mathfrak{J}_{\mu}(x_1,\ldots,x_n)= \sum\limits_{\sigma \in S_n}  \mathbf{sgn}  (\sigma) \prod\limits_{i} \left( \left(\frac{x_{\sigma_i}}{1-x_{\sigma_i}}\right)^{\mu_i}x_{\sigma_i}^{n-i} \right) 
\end{eqnarray*}

This formula can be obtained by applying the standard involution on symmetric functions to the determinantal definition for Grothendieck polynomials found in  \cite{IN13} among other places.
Next, we define a slight generalization of this polynomial. Suppose $\mu$ has longest part $\ell=\mu_1$. Let the weak symmetric Grothendieck polynomial in $n+\ell$ variables be defined by:

\begin{eqnarray*}
V*\mathfrak{J}_{\mu}(x_1,\ldots,x_n,t_1,\ldots,t_{\ell})= \sum\limits_{\sigma \in S_n}  \mathbf{sgn}  (\sigma) \prod\limits_{i} \left(\left(\frac{x_{\sigma_i}}{1-t_{\ell}x_{\sigma_i}}\right)\cdots \left(\frac{x_{\sigma_i}}{1-t_{\ell-\mu_i+1}x_{\sigma_i}}\right) x_{\sigma_i}^{n-i}\right)
\end{eqnarray*}

Clearly $\mathfrak{J}_{\mu}(x_1,\ldots,x_n,1,\ldots,1)=\mathfrak{J}_{\mu}(x_1,\ldots,x_n)$ whereas $\mathfrak{J}_{\mu}(x_1,\ldots,x_n,0,\ldots,0)=s_{\mu}(x_1,\ldots,x_n)$ (see the classical definition of the Schur function as found in 7.15 of \cite{STA99}). Note that the coefficient of $ t_1^{T_1} \cdots t_{\ell}^{T_{\ell}}$ in $V*\mathfrak{J}_{\mu}(x_1,\ldots,x_n,t_1,\ldots,t_{\ell})$ is given by:

\begin{eqnarray*}
\sum_{\sigma \in S_n}  \mathbf{sgn}  (\sigma)h_{T_{\ell}}(x_{\sigma_1},\ldots, x_{\sigma_{c_{\ell}}}) \cdots h_{T_1}(x_{\sigma_1},\ldots, x_{\sigma_{c_1}})x_{\sigma_1}^{\mu_1+n-1} \cdots x_{\sigma_n}^{\mu_n+0}\\
\end{eqnarray*}
where $(c_{\ell},\ldots,c_1)=\mu'$. Since $n \geq c_{\ell} \geq \cdots \geq c_1$ Lemma \ref{hmult} implies that this coefficient is:

\begin{eqnarray*}
\sum_{\lambda=\lambda^{\ell} \supseteq \cdots \supseteq \lambda^1 \supseteq \lambda^0=\mu+\delta}
\left(\sum_{\sigma \in S_n}  \mathbf{sgn}  (\sigma)x_{\sigma_1}^{\lambda_1} \cdots x_{\sigma_n}^{\lambda_n}\right)
\end{eqnarray*}
where the outer sum is over all good $\mathbf{T}$-extensions and $\delta=(n-1,\ldots,0)$.
Fix some $\lambda \supseteq \mu + \delta$ and let $\rho=\lambda-\delta$ and consider the set of semistandard Young tableaux of shape $\rho/\mu$ and weight $T_1,\ldots,T_{\ell}$ such that every entry $i$ occurs on or above row $c_i$.  We claim that each good $\mathbf{T}$-extension $\lambda=\lambda^{\ell} \supseteq \cdots \supseteq \lambda^1 \supseteq \lambda^0=\mu+\delta$ encodes such a tableau in a bijective fashion.  Indeed, the bijection is given by first considering the tableau of shape $\lambda/(\mu+\delta)$ where each strip $\lambda^i/\lambda^{i-1}$ is filled with $i$s  for each $i \in [1,\ell]$, and then moving all entries in row $j$ to the left by $n-j$ positions for each $j \in [1,n]$ (and eliminating excess empty boxes).  

 Since the inner sum in the expression above is precisely the classical definition of $V*s_{\rho}(x_1,\ldots,x_n)$ where $s_{\rho}(x_1,\ldots,x_n)$ is the Schur function, it now follows that the expression above is the same as:
\begin{eqnarray*}
V*\sum_{\rho \supseteq \mu}
(M^{\mathbf{T}}_{\rho/\mu}) s_{\rho}(x_1,\ldots,x_n)
\end{eqnarray*}
where $M^{\mathbf{T}}_{\rho/\mu}$ is the number of semistandard Young tableaux of shape $\rho/\mu$ and weight $T_1,\ldots,T_{\ell}$ such that every entry $i$ occurs on or above row $c_i$.

\begin{dfn}
Let $\mu$ be a partition with $n$ parts and conjugate $\mu'=(c_{\ell},\ldots,c_1)$. We define a \textbf{\emph{restricted tableau}} of shape $\lambda/\mu$, or element of $RT(\lambda/\mu)$, to be a semistandard Young tableau of shape $\lambda/\mu$ in the alphabet $\{1,\ldots,\ell\}$ such that each entry $i$ occurs on or above row $c_i$. If $R \in RT(\lambda/\mu)$ then the weight of $R$, denoted $wt(R)$ is the vector $(w_1,\ldots,w_{\ell})$ where $w_i$ is the number of $i$s that appear in $R$.
\end{dfn}

\begin{ex}\label{resex}
Let $\lambda=(7,6,5,4)$ and $\mu =(4,3,3,2)$ so that $c_4=4$, $c_3=4$, $c_2=3$, $c_1=1$.

\begin{eqnarray*}
\begin{ytableau}
*(green) \cdot & *(green) \cdot &*(green) \cdot &*(green) \cdot & *(green) 1 & *(green) 2 & *(green) 3 \\
*(yellow) \cdot & *(yellow) \cdot &*(yellow) \cdot & *(yellow) 2 & *(yellow) 2 & *(yellow) 4 \\
*(yellow) \cdot & *(yellow) \cdot &*(yellow) \cdot & *(yellow) 3 & *(yellow) 3 \\
*(red) \cdot &*(red) \cdot & *(red) 3 & *(red) 4 \\
\end{ytableau}
\end{eqnarray*}
Since all 1s lie in the green, all 2s lie in the green or yellow, and all 3s and all 4s lie in the red, yellow, or green, this is an element of $RT(\lambda/\mu)$. It has weight $(1,3,4,2)$.

\end{ex}
With this definition, the computation before the definition shows:
\begin{thm}\label{spex}

Let $\mu$ be a partition with $n$ parts (some may equal $0$) and longest part equal to $\ell$ and set $\mathbf{t}=(t_1,\ldots , t_{\ell})$, $\mathbf{x}=(x_1,\ldots,x_n)$ then

\begin{eqnarray*}
\mathfrak{J}_{\mu}(x_1,\ldots,x_n,t_1,\ldots,t_{\ell})=
\sum_{\lambda \supseteq \mu} \sum_{R \in RT(\lambda/\mu)}
\mathbf{t}^{wt(R)}s_{\lambda}(\mathbf{x})
\end{eqnarray*}
\end{thm}

\subsection{Straight-shape multiset tableaux}

\begin{dfn}[\cite{LP07}]
Given a partition $\mu$, with conjugate $(c_{\ell},\ldots,c_1)=\mu'$ a \textbf{\emph{ multiset tableau}} of shape $\mu$, or an element of $MT(\mu)$ is a collection of boxes with $\mu_i$ boxes in each row and the rows left-justified, along with a filling of said boxes with the following properties.
\begin{enumerate}
\item{Each box contains a nonempty multiset of the numbers $\{1,2,\ldots\}$.}
\item{The maximum value of each box is strictly less than the minimum value of the box below it (if it exists) and weakly less than the minimum value of the box to its right (if it exists).}
\end{enumerate}
\end{dfn}

The weight, denoted $wt$, of a multiset tableau is the vector $(w_1,w_2, \ldots)$ where $w_i$ is the total number of $i$s appearing in the tableau. We label the columns from left to right by $\ell, \ell-1, \ldots, 1$. That is, by box $b_{ij}$ we refer to the box that is in the $i^{th}$ row from the top row and the $\ell-j+1^{st}$ column from the leftmost column. Define the column weight of a multiset tableau, $cw$, to be the vector $(T_1,\ldots,T_{\ell})$ where $T_i$ is the difference between the number of entries in column $i$ and the height of that column ($c_i$). By $|b_{ij}|$ we simply mean the total number of entries in box $b_{ij}$ and $|b_{ij}(x)|$ refers more specifically to the number of entries in box $b_{ij}$ in tableau $x$. By the nonemptiness property $|b_{ij}| \geq 1$ if box $b_{ij}$ exists and, by convention, is $0$ otherwise.
\begin{ex}
Let $\mu=(3,3,2)$. Then
\begin{eqnarray*}
\begin{ytableau}
11&12&333\\2&3&445\\34&4
\end{ytableau}
\end{eqnarray*}
is an element $P \in MT(\mu)$ with $wt(T)=(3,2,5,4,1)$ and $cw(P)=(4,1,2)$.
\end{ex}

\begin{dfn}
A \textbf{\emph{maximal multiset tableau}} of shape $\mu$, or element of $\overline{MT}(\mu)$, is a multiset tableau of shape $\mu$ with the following properties:
\begin{enumerate}
\item{Each box $b_{ij}$ may only contain $i$s.}
\item{For each $ i \geq 1$ and $ k \geq 1$ we have $\sum\limits_{1 \leq j \leq k} |b_{(i+1)j}|-|b_{i(j-1)}| \leq 1$}
\end{enumerate}
where by convention $|b_{i0}|=0$.  Note that  (2) when $k=\mu_1$ implies that each row $i$ contains at least as many entries as row $i+1$ since $|b_{(i)\mu_1}|\geq1$.  Along with (1) this implies the weight of a maximal multiset tableau is necessarily a partition.  
\end{dfn}

\begin{ex} \label{maxex}
Let $\mu=(4,3,3,1)$. Then
\begin{eqnarray*}
\begin{ytableau}
1&11&11&11\\22&2&222\\3&333&3\\44&44
\end{ytableau}
\end{eqnarray*}
is an element $P \in \overline{MT}(\mu)$ with $wt(T)=(7,6,5,4)$ and $cw(P)=(1,3,4,2)$.
\end{ex}

\begin{prop}
There is a bijection from the subset of $\overline{MT}(\mu)$ with weight $\lambda$ and column weight $\mathbf{T}$ to the subset of $RT(\lambda/\mu)$ with weight $\mathbf{T}$.
\end{prop}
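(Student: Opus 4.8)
The plan is to make the bijection completely explicit on the level of box sizes and then reduce everything to two well‑definedness checks. Recall that in a maximal multiset tableau $P\in\overline{MT}(\mu)$ every box $b_{ij}$ contains only $i$'s, so $P$ is determined by the numbers $|b_{ij}|$, which are $\ge 1$ exactly when the box exists, i.e.\ when $i\le c_j$ (equivalently $\mu_i\ge \ell-j+1$), and $0$ otherwise. Given such a $P$ with $wt(P)=\lambda$ and $cw(P)=\mathbf T$, set $r_{ij}:=|b_{ij}|-1\ge 0$ for existing boxes and $r_{ij}:=0$ otherwise, and define $\lambda^k$ by $\lambda^k_i:=\mu_i+\sum_{j=1}^k r_{ij}$, so $\lambda^0=\mu$. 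I will send $P$ to the tableau $R$ of shape $\lambda^\ell/\mu$ in which the cells of $\lambda^h/\lambda^{h-1}$ are filled with $h$'s; in the reverse direction, from a restricted tableau I recover $|b_{ij}|$ as one more than the number of $j$'s in its $i$‑th row. These assignments are visibly mutually inverse, so the proof amounts to showing each is well defined.

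For the forward direction I must check: (i) $\lambda:=wt(P)$ is a partition and $\lambda^\ell=\lambda$; (ii) each $\lambda^h/\lambda^{h-1}$ is a horizontal strip, so that $R$ is a genuine semistandard tableau and, inductively, each $\lambda^h$ is a partition; (iii) $wt(R)=\mathbf T$ and $R$ obeys the restriction that entry $h$ lies on or above row $c_h$. Item (i) follows by applying the maximality inequality $\sum_{1\le j\le k}(|b_{(i+1)j}|-|b_{i(j-1)}|)\le 1$ with $k=\ell$: the left side collapses to $\lambda_{i+1}-\lambda_i+|b_{i\ell}|$, and $|b_{i\ell}|\ge 1$ (the leftmost column always reaches every row) forces $\lambda_{i+1}\le\lambda_i$; the identities $\sum_j|b_{ij}|=\lambda_i$ and $\sum_i|b_{ij}|=c_j+T_j$ then give $\lambda^\ell=\lambda$ and both statements in (iii), while the restriction in (iii) is immediate because $r_{ij}=0$ whenever $i>c_j$. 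Item (ii) is the heart of the matter: one rewrites $\sum_{j\le k}|b_{(i+1)j}|=(\lambda^k_{i+1}-\mu_{i+1})+\max(0,k-\ell+\mu_{i+1})$, the last term counting those columns $j\le k$ that reach row $i+1$ (using $c_j\ge i+1\iff j\ge\ell-\mu_{i+1}+1$), and likewise for row $i$; substituting into the maximality inequality and splitting on $k$: for $k\le\ell-\mu_{i+1}$ no box $b_{(i+1)j}$ with $j\le k$ exists, so $\lambda^k_{i+1}=\mu_{i+1}\le\mu_i\le\lambda^{k-1}_i$ and there is nothing to prove, while for $k\ge\ell-\mu_{i+1}+1$ the two ``staircase corrections'' cancel up to $1+\mu_{i+1}-\mu_i$ (this is where $\mu_{i+1}\le\mu_i$ enters) and the inequality collapses exactly to the horizontal‑strip condition $\lambda^k_{i+1}\le\lambda^{k-1}_i$; it suffices to treat $0\le k\le\ell$ since the left side is non‑increasing in $k$ beyond that.

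For the reverse direction, start from $R\in RT(\lambda/\mu)$ with $wt(R)=\mathbf T$. The cells of $R$ labelled $\le k$ together with $\mu$ form a partition $\lambda^k$ (because $R$ is column‑strict), yielding a chain $\mu=\lambda^0\subseteq\cdots\subseteq\lambda^\ell=\lambda$ of horizontal strips; set $r_{ij}=\lambda^j_i-\lambda^{j-1}_i$ and define $P$ by $|b_{ij}|=r_{ij}+1$ for existing boxes. That $P\in MT(\mu)$ is routine — a box containing only $i$'s automatically satisfies the strict‑column and weak‑row conditions and is nonempty — and maximality condition (1) is immediate. Maximality condition (2) is obtained by running the computation of (ii) in reverse: for $k\ge\ell-\mu_{i+1}+1$ it is equivalent to $\lambda^k_{i+1}\le\lambda^{k-1}_i$, which holds since $\lambda^k/\lambda^{k-1}$ is a horizontal strip, and for $k\le\ell-\mu_{i+1}$ it reduces to the triviality $0\le 1$ via $\lambda^k_{i+1}=\mu_{i+1}$; the would‑be intermediate regime is empty precisely because $\mu_{i+1}\le\mu_i$. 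Finally $wt(P)=\lambda$ and $cw(P)=\mathbf T$ come from the same bookkeeping identities $\sum_j|b_{ij}|=\mu_i+(\lambda_i-\mu_i)=\lambda_i$ and $\sum_i|b_{ij}|=c_j+T_j$ used above, together with the observation that the restriction on $R$ forces $r_{ij}=0$ on the non‑existent boxes.

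The main obstacle is item (ii), i.e.\ maximality condition (2): correctly isolating the ``staircase correction'' $\max(0,k-\ell+\mu_{i+1})$ that appears because a multiset tableau is left‑justified while a restricted tableau lives on a skew shape, and then seeing that in each range of $k$ the maximality inequality is equivalent to exactly the property one needs — the horizontal‑strip condition for large $k$, and a triviality for small $k$ — with the restriction condition on $R$ and the partition inequality $\mu_{i+1}\le\mu_i$ being what eliminates the problematic middle range.
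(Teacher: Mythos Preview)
Your bijection is exactly the one in the paper: place $|b_{ij}|-1$ copies of $j$ in row $i$ of the skew tableau, and in the other direction set $|b_{ij}|$ to be one more than the number of $j$'s in row $i$. The only difference is in how the equivalence between condition~(2) and column-strictness of $R$ is argued: the paper reasons directly on the positions (for the minimal $k$ violating~(2), row $i{+}1$ of $R$ acquires an entry $k$ sitting weakly below an entry $\ge k$ in row $i$, and conversely), whereas you pass through the filtration $\mu=\lambda^0\subseteq\cdots\subseteq\lambda^\ell=\lambda$ and the horizontal-strip condition $\lambda^k_{i+1}\le\lambda^{k-1}_i$, introducing the explicit ``staircase correction'' $\max(0,k-\ell+\mu_{i+1})$ to account for which boxes actually exist. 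Your computation is correct, and it makes explicit a couple of points the paper leaves implicit (e.g.\ that $wt(P)$ is automatically a partition, via the $k=\ell$ case). The paper's phrasing is shorter and avoids the case split on $k$, while yours makes the underlying inequality $\lambda^k_{i+1}\le\lambda^{k-1}_i$ manifest; substantively the two arguments are the same.
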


\begin{proof}
Let $X$ be the subset of elements of $MT(\mu)$ with weight $\lambda$ and column weight $\mathbf{T}$ that satisfy property (1) above. Let $Y$ be the set of fillings (one entry per box) of shape $\lambda/\mu$ and weight $\mathbf{T}$ that are weakly increasing along rows such that every entry $i$ occurs on or above row $c_i$ (equivalently, row $i$ only contains entries greater than $\ell-\mu_i$). The map $x \rightarrow y$ where $y$ is defined by the property that for each $(i,j)$, row $i$ of $y$ contains exactly $|b_{ij}(x)|-1$ copies of $j$ is a bijection from $X$ to $Y$. Moreover if $x \rightarrow y$ then $x$ satisfies property (2) above if and only if the columns of $y$ are strictly increasing down rows: Indeed, if there is some $i$ and some $k$ such that $\sum\limits_{1 \leq j \leq k} |b_{(i+1)j}|-|b_{i(j-1)}| >1$ then for the minimal such $k$, the rightmost $k$ appearing in row $i+1$ of $y$ will lie in column $k+1-\sum\limits_{1 \leq j \leq k} |b_{(i+1)j}|$ whereas the rightmost $k-1$ (or rightmost instance of the greatest number less than $k-1$ if this row contains no $k-1$) appearing in row $i$ of $y$ will lie in column $k-\sum\limits_{1 \leq j \leq k} |b_{i(j-1)}|$, that is, strictly to the left of the aforementioned $k$. Hence said $k$ must lie below a number greater than or equal to $k$.  On the other hand, if in some column, say $p$, row $i+1$ of $y$ contains a $k$ that lies below some $k'$ in row $i$ with $k'\geq k$ then $k+1-\sum\limits_{1 \leq j \leq k} |b_{(i+1)j}| \leq p$  and $k-\sum\limits_{1 \leq j \leq k} |b_{i(j-1)}| > p$ so that we have $\sum\limits_{1 \leq j \leq k} |b_{(i+1)j}|-|b_{i(j-1)}| >1$. Since the elements of $Y$ that are strictly increasing down columns are exactly the elements of $RT(\lambda/\mu)$ with weight $\mathbf{T}$, the map restricted to the elements of $X$ that satisfy property (2) gives the desired bijection.
\end{proof}

\begin{ex}
The tableaux of examples \ref{resex} and \ref{maxex} correspond under this bijection.
\end{ex}

\begin{cor}\label{hwns}
Let $\mu$ be a partition with $n$ parts (some may equal $0$) and longest part equal to $\ell$ and set $\mathbf{t}=(t_1,\ldots , t_{\ell})$, $\mathbf{x}=(x_1,\ldots,x_n)$ then
\begin{eqnarray*}
\mathfrak{J}_{\mu}(x_1,\ldots,x_n,t_1,\ldots,t_{\ell})=
\sum_{P \in \overline{MT}_{\mu}}
\mathbf{t}^{cw(P)}s_{wt(P)}(\mathbf{x})
\end{eqnarray*}
\end{cor}

\subsection{Combinatorial Definition of $\mathfrak{J}_{\mu}(\mathbf{x},\mathbf{t})$}

In this section we will give an equivalent combinatorial definition of $\mathfrak{J}_{\mu}$. We will need to use the \textit{dual RSK} column insertion algorithm (see, for instance ~\cite{STA99}). We refer to dual RSK insertion of an element into a column, and the reverse insertion of an element under dual RSK as \textit{insert} and \textit{reverse insert}. These maps are reviewed below.

Let $K$ be a valid column (each box of $K$ contains exactly one number and the numbers strictly increase from top to bottom). One \textit{inserts} $a$ into $K$, denoted $a \rightarrow K$ as follows:
Let $\hat{a}$ denote the uppermost entry in $K$ such that $a \leq \hat{a}$. If $\hat{a}$ exists, replace $\hat{a}$ with $a$ and bump out $\hat{a}$. Otherwise, append $a$ to the bottom of $K$. The result is recorded as the pair $(K',\hat{a})$ if the second of this pair exists and just $K'$ otherwise.
On the other hand if $z \geq a$ for some $a \in K$ then we define \textit{reverse insertion} of $z$ into $K$ or $K \leftarrow z$ as follows: Let $\hat{z}$ denote the bottommost entry in $K$ such that $z \geq \hat{z}$. Replace $\hat{z}$ with $z$ and bump out $\hat{z}$. The result is recorded as the pair $(\hat{z},K')$.

Notice the basic properties:

\begin{enumerate}
\item{If $a \rightarrow K=K'$ then $K'$ is a valid column. }
\item{if $a \rightarrow K= (K',\hat{a})$ then $K'$ is a valid column.}
\item{If $K \leftarrow z=(\hat{z},K')$ then $K'$ is a valid column.}
\item{If $a \leq z$ then either
\begin{itemize}
\item{$z \rightarrow K=K'$ and $a \rightarrow K' =({K}^{\prime\prime},\hat{a})$ for some $\hat{a}$.}
\item{$z \rightarrow K=(K',\hat{z})$ and $a \rightarrow K' =({K}^{\prime\prime},\hat{a})$ where $\hat{a} \leq \hat{z}$.}
\end{itemize}}
\item{If $a \leq z$ and
$K \leftarrow a=(\hat{a},K')$ and $K' \leftarrow z=(\hat{z},{K}^{\prime\prime})$ then $\hat{a} \leq \hat{z}$.}
\end{enumerate}

\begin{prop} \label{bjs}
There is a bijection $\Psi: MT(\mu) \rightarrow \bigcup\limits_{\lambda \supseteq \mu} SSYT(\lambda) \times RT(\lambda/\mu)$, such that if $P \rightarrow (Q,R)$ then:
\begin{enumerate}
\item{$wt(P)=wt(Q)$.}
\item{$cw(P)=wt(R)$.}
\end{enumerate}

\end{prop}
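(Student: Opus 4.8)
The plan is to construct $\Psi$ explicitly via iterated dual RSK column insertion, reading the multiset tableau $P$ column by column from right to left (i.e., from column $\ell$ down to column $1$) and within each column from bottom to top, inserting entries one at a time into a growing pair consisting of an insertion tableau (which will become $Q$) and a recording tableau (which will become $R$). Concretely, I would start with $Q$ empty and build up. When I process the entries of column $j$ of $P$, I insert each of them into $Q$; each insertion either lands a box at the end of some row or triggers a bumping cascade, and I record in $R$ a box labeled $j$ in whatever cell of $\lambda/\mu$ gets added. The key design choice is the \emph{order} of insertion: I expect that processing columns right-to-left and, crucially, reading the box-contents in a carefully chosen order (within a box all entries are equal by the multiset/column conditions only after invoking maximality — but here $P$ is a general element of $MT(\mu)$, so a box may contain a genuine multiset) makes the semistandardness conditions of $P$ translate exactly into the column-strictness needed for $R$ to be a restricted tableau and for $Q$ to be an honest SSYT.

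First I would set up the insertion order precisely and verify that, after inserting all entries of columns $\ell, \ell-1, \dots, j+1$, the partial insertion tableau $Q^{(j)}$ has shape containing the shape whose columns are the heights $c_\ell, \dots, c_{j+1}$ — this is where the hypothesis $\mu' = (c_\ell,\dots,c_1)$ and the box-count bookkeeping enters, and it is what guarantees that when a $j$ gets recorded in $R$ it lands on or above row $c_j$, giving $R \in RT(\lambda/\mu)$. Then I would track the two weight statistics: property (1), $wt(P) = wt(Q)$, is immediate because dual RSK insertion preserves the multiset of values inserted, and every entry of $P$ is inserted exactly once into $Q$; property (2), $cw(P) = wt(R)$, follows because the number of boxes labeled $j$ added to $R$ equals the number of insertions coming from column $j$ that grow the shape, and since the shape grows by exactly one box per insertion and column $j$ of $P$ contains $c_j + T_j$ entries (by definition of $cw$) of which $c_j$ are "used up" accounting for the column height, exactly $T_j$ of them contribute new boxes recorded with label $j$.

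To show $\Psi$ is a bijection I would construct the inverse by reverse dual RSK: given $(Q,R)$, read the cells of $R$ in the reverse order (largest label $\ell$ first, and within a label in the reverse of the insertion order), reverse-insert the corresponding entry out of $Q$, and reassemble the ejected values into the columns of a multiset tableau $P$. The properties (1)–(5) of \textit{insert}/\textit{reverse insert} listed just before the proposition are exactly the ingredients needed to check that (a) reverse insertion is well-defined at each step (the relevant inequality $z \le a$ holds), and (b) the ejected values, grouped by the column they came from, are weakly increasing in the way required to form a valid box-filling of $MT(\mu)$ — properties (4) and (5), which control how two successive (reverse) insertions of comparable elements interact, are precisely what forces the "max of a box $<$ min of box below, $\le$ min of box to the right" conditions to be preserved. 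Running the forward and reverse constructions and checking they undo each other is then a matter of the standard fact that dual RSK insertion and reverse insertion are mutually inverse at the level of single steps.

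The main obstacle I anticipate is not any single clever idea but rather the bookkeeping that ties together three things simultaneously: the precise insertion order within and across columns, the claim that the partial insertion shapes always dominate the relevant "staircase of column heights" so that $R$ comes out restricted, and the verification that the reverse map outputs a filling satisfying \emph{both} multiset-tableau semistandardness conditions. In particular, I expect the delicate point to be showing that the row-index where a box is added to $R$ when inserting an entry from column $j$ is always $\ge c_j$ — this requires knowing that before any column-$j$ entry is inserted, rows $1,\dots,c_j$ of $Q$ are already "full enough," which must be proved by induction on $j$ using the nesting $c_\ell \ge \cdots \ge c_1$ together with how many boxes each earlier column contributed. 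Once that invariant is pinned down, properties (1) and (2) and the bijectivity fall out relatively cleanly from the listed properties of dual RSK column insertion.
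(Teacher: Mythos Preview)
Your approach has a genuine gap: inserting \emph{all} entries of $P$ into an initially empty $Q$ does not yield the bijection claimed, because it fails to distinguish the ``structural'' entry of each box (one per box, accounting for the shape $\mu$) from the ``excess'' entries (those counted by $cw(P)$). Concretely, take $\mu=(2)$ and compare $P'$ with boxes $\{1,1\},\{1\}$ to $P''$ with boxes $\{1\},\{1,1\}$ (left box is column~$2$, right box is column~$1$). In both cases your procedure inserts three $1$'s and produces $Q=[1,1,1]$, and in both cases the unique box of $\lambda/\mu$ at position $(1,3)$ is created while processing column~$1$, so you record $R=[1]$. But $cw(P')=(0,1)\neq(1,0)=cw(P'')$, so property~(2) fails and the map is not injective. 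Your justification that ``$c_j$ of the insertions from column~$j$ are used up accounting for the column height'' is simply false: every insertion grows the shape by one box, and there is no reason the boxes created while processing column~$j$ should land inside $\mu$ exactly $c_j$ times.

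The paper's construction avoids this by never building $Q$ from scratch. Instead it keeps the multiset tableau intact and, one column at a time (from column~$1$ up to column~$\ell$), circles the minimum entry of each box in that column and bumps only the \emph{uncircled} excess entries rightward via dual RSK. The new boxes created when clearing column~$k$ form a horizontal strip, recorded with label~$k$ in $R$; since exactly $T_k$ entries are displaced from column~$k$, the identity $wt(R)=cw(P)$ is immediate. The circled entries stay in place throughout, so at the end the tableau is an honest SSYT of some shape $\lambda\supseteq\mu$, and the provenance of each recorded box (which column of $P$ its entry originated in) is exactly what the label in $R$ encodes. This separation of structural versus excess entries is the missing idea in your proposal.
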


\begin{proof}[Proof preliminaries]\renewcommand{\qedsymbol}{}
Fix $\mu$ a partition with conjugate $\mu'=(c_{\ell},\ldots,c_1)$. In the following, we will label the columns of \emph{any} tableau from left to right by $\ell, \ell-1,\ldots,1,0,-1,\ldots$.

 For $k \leq \ell$, define the set $MT_{k}(\lambda)$ to be the subset of elements of $MT(\lambda)$ that have only single entries in the boxes in columns $k-1,\ldots,1,0,-1,\ldots$. Define the set $RT_{k}(\lambda/\mu)$ to be the subset of elements of  $RT(\lambda/\mu)$ that have only entries from $\{1,2,\ldots,k-1\}$. Given a pair $(Q,R) \in MT_k(\lambda) \times RT_k(\lambda/\mu)$ define the weight and column weight of this pair as $wt(Q,R)=wt(Q)$ and $cw(Q,R)=cw(Q)+wt(R)$. To prove the proposition it suffices to find a weight and column weight preserving bijection for each $k$ from $\bigcup\limits_{\lambda \supseteq \mu} MT_k(\lambda) \times RT_k(\lambda/\mu)$ to $\bigcup\limits_{\lambda \supseteq \mu} MT_{k+1}(\lambda) \times RT_{k+1}(\lambda/\mu)$  (and then compose: $\Psi(P)=\Psi_{\ell}\circ \cdots \circ \Psi_1(P,Q_0)$ where $Q_0$ is the empty tableau of shape $\mu/\mu$). To do the former, it is enough to find a weight preserving bijection $\psi_k: MT_k(\lambda) \rightarrow \bigcup\limits_{\nu \supseteq \lambda} MT_{k+1}(\nu)$ where the union is over all $\nu$ such that $\nu/\lambda$ is a horizontal strip with no box below row $c_{k}$.  We then set $\Psi_k(Q,R)=(\psi_k(Q),R')$ where $R'$ is obtained by appending boxes to $R$ until its outer shape is the same as $\psi_k(Q)$  and filling each appended box with the entry $k$.

Before we can define $\psi_k$ we need to introduce the following map: Let $T \in MT_{k}(\lambda)$. Define $\textit{out}(T)$
as follows: First, in each box of column $k$ circle (one of) the minimum entry(s) from that box. Now find (one of) the largest noncircled entry(s) in column $k$ and remove it and \textit{insert} it into the column to the right of the column from which it was removed. After this, each time an element is bumped, \textit{insert} it into the next column to the right until some entry is eventually appended to a (possibly empty) column.  Note the following properties of $\textit{out}$.

\begin{enumerate}
\item{The path of positions where an element is bumped/appended moves weakly up as we move to the right.}
\item{The result of $\textit{out}$ is a multiset tableau.}
\item{If $\textit{out}(T)$ and $\textit{out}(\textit{out}(T))$ are both defined then the box that $\textit{out}$ appends to $\textit{out}(T)$ lies strictly to the right of the box that $\textit{out}$ appends to $T$.}
\end{enumerate}

\begin{ex} \label{outex}
Suppose that column $k$ is the second column from the left. Each $\longrightarrow$ represents an application of $\textit{out}$.
\begin{eqnarray*}
\begin{ytableau}
1&{\Large{\textcir{\normalsize{1}}}}1&2&2\\2& {\Large{\textcir{\normalsize{2}}}}3 \textcolor{red}{3}&4\\34&{\Large{\textcir{\normalsize{4}}}}\\
\end{ytableau}\longrightarrow
\begin{ytableau}
1&{\Large{\textcir{\normalsize{1}}}}1&2&2\\2& {\Large{\textcir{\normalsize{2}}}}\textcolor{red}{3}&3&*(green)4\\34&{\Large{\textcir{\normalsize{4}}}}\\
\end{ytableau} \longrightarrow\\
\begin{ytableau}
1&{\Large{\textcir{\normalsize{1}}}}\textcolor{red}{1}&2&2&*(green)4\\2& {\Large{\textcir{\normalsize{2}}}}&3&*(green)3\\34&{\Large{\textcir{\normalsize{4}}}}\\
\end{ytableau} \longrightarrow
\begin{ytableau}
1&{\Large{\textcir{\normalsize{1}}}}&1&2&*(green)2&*(green)4\\2& {\Large{\textcir{\normalsize{2}}}}&3&*(green)3\\34&{\Large{\textcir{\normalsize{4}}}}\\
\end{ytableau}
\end{eqnarray*}
Uncircled numbers being removed are shown in red, and the boxes being added appear in green.
\end{ex}

We will also need a map called $\textit{in}_b$. Let $T \in MT_{k}(\nu)$ for some $\nu$ such that $\nu/\lambda$ is a horizontal strip with no box below row $c_{k}$ and suppose $b$ is some corner box of this strip. First, in each box of column $k$ circle (one of) the minimum entry(s) from that box. Define $\textit{in}_b(T)$ as follows: Remove the entry from box $b$ and \textit{reverse insert} it into the column to the left. After this, each time an element is bumped \textit{reverse insert} it into the column to the left until an element is removed from column $k-1$. Then add this element to the lowest box of column $k$ such that the resulting column satisfies the column strict requirement in (2) of the definition of multiset tableau. Note the following properties of $\textbf{in}_b$.

\begin{enumerate}
\item{The path of positions where an element is bumped/added moves weakly down as we move to the left.}
\item{The result of $\textit{in}_b$ is a multiset tableau.}
\item{If $b'$ lies to the left of $b$ and if $\textit{in}_b(T)$ and $\textit{in}_{b'}(\textit{in}_b(T))$ are both defined then the element that $\textit{in}_{b'}$ adds to column $k$ of $\textit{in}_b(T)$ is greater than or equal to the element $\textit{in}_b$ adds to column $k$ of $T$. }
\end{enumerate}

Moreover, $\textit{out}$ and $\textit{in}_b$ are related as follows:

\begin{enumerate}
\item{If $\textit{out}$ appends box $b$ when applied to $T$, then $\textit{in}_b(\textit{out}(T))=T$.}
\item{If the element that $\textit{in}_b$ adds to column $k$ when applied to $T$ is the largest or tied for the largest uncircled element on column $k$ then $\textit{out}(\textit{in}_b(T))=T$ .}
\end{enumerate}

\begin{ex} \label{inex}
Let $k=2$. Then $\textit{in}_{red}(\textit{in}_{yellow}(\textit{in}_{green}(T)))=T'$ where:
\begin{eqnarray*}
T=\begin{ytableau}
1&{\Large{\textcir{\normalsize{1}}}}&1&2&*(yellow)2&*(green)4\\2& {\Large{\textcir{\normalsize{2}}}}&3&*(red)3\\34&{\Large{\textcir{\normalsize{4}}}}\\
\end{ytableau} \longrightarrow
\begin{ytableau}
1&{\Large{\textcir{\normalsize{1}}}}1&2&2\\2& {\Large{\textcir{\normalsize{2}}}}3 \textcolor{red}{3}&4\\34&{\Large{\textcir{\normalsize{4}}}}\\
\end{ytableau}=T'
\end{eqnarray*}
Note that $T$ is the last tableau in example \ref{outex} and $T'$ is the first tableau in example \ref{outex}.
\end{ex}
\end{proof}

\begin{proof}[Conclusion of proof of Proposition \ref{bjs}]

We prove that there exists a bijection $\psi_k: MT_k(\lambda) \rightarrow \bigcup\limits_{\nu \supseteq \lambda} MT_{k+1}(\nu)$. If $T \in MT_k(\lambda)$ we define $\psi_k(T)$ simply by applying $\textit{out}$ until column $k$ only contains single entries. This is an element of $\bigcup\limits_{\nu \supseteq \lambda} MT_{k+1}(\nu)$ because of the properties (1), (2), and (3) of $\textit{out}$. If $T \in \bigcup\limits_{\nu \supseteq \lambda} MT_{k+1}(\nu)$ we define $\psi_k^{-1}(T)$ by successively applying $\textit{in}_b$ to the rightmost box $b$ that lies outside of the shape of $\lambda$, until the result has shape $\lambda$. This is an element of $MT_k(\lambda)$ because of the property (2) of $\textit{in}_b$. If $T \in MT_k(\lambda)$ then $\psi_k^{-1}(\psi_k(T))=T$ because of property (3) of $\textit{out}$ and property (1) of how $\textit{out}$ and $\textit{in}_b$ are related. If $T \in \bigcup\limits_{\nu \supseteq \lambda} MT_{k+1}(\nu)$ then $\psi_k(\psi_k^{-1}(T))=T$ by property (3) of $\textit{in}_b$ and property (2) of how $\textit{out}$ and $\textit{in}_b$ are related.

\end{proof}

\begin{thm}\label{mon}
Let $\mu$ be a partition with $n$ parts (some may equal $0$) and longest part equal to $\ell$ and set $\mathbf{t}=(t_1,\ldots , t_{\ell})$, $\mathbf{x}=(x_1,\ldots,x_n)$ then
\begin{eqnarray*}
\mathfrak{J}_{\mu}(x_1,\ldots,x_n,t_1,\ldots,t_{\ell})=
\sum_{P \in MT(\mu)}
\mathbf{t}^{cw(P)}\mathbf{x}^{wt(P)}
\end{eqnarray*}
\end{thm}

\begin{proof} We have:\\

$ \mathfrak{J}_{\mu}(x_1,\ldots,x_n,t_1,\ldots,t_{\ell}) $
\begin{align*}
&=&
\sum_{\lambda \supseteq \mu} \sum_{R \in RT(\lambda/\mu)}
\mathbf{t}^{wt(R)} s_{\lambda}(\mathbf{x}) && \text{Theorem \ref{spex}}& \\
&=&
\sum_{\lambda \supseteq \mu} \sum_{R \in RT(\lambda/\mu)} \sum_{Q \in SSYT(\lambda)}
\mathbf{t}^{wt(R)}\mathbf{x}^{wt(Q)}&& \text{Definition of $s_{\lambda}$} &\\
&=&
\sum_{P \in MT(\mu)}
\mathbf{t}^{cw(P)}\mathbf{x}^{wt(P)}&& \text{Proposition \ref{bjs}}& \\
\end{align*}

The definition of $s_{\lambda}$ mentioned here is the combinatorial one (see 7.10 of \cite{STA99}).

\end{proof}

\begin{remark}
There is a natural crystal structure on the set of semistandard Young tableaux ~\cite{BS17}. Moreover, it is not difficult to see that the bijection $\Psi$ has the property that whenever $\Psi(P)=(Q,R)$ then $P \in \overline{MT}(\mu)$ if and only if $Q$ is highest weight. Thus $\Psi^{-1}$ induces a natural crystal structure on $MT(\mu)$ where the highest weight elements are precisely those that lie in $\overline{MT}(\mu)$. This crystal structure is interpreted algebraically by comparing Corollary \ref{hwns} (where the sum is over highest weight elements) with Theorem \ref{mon} (where the sum is over all elements). This crystal structure coincides with that given in ~\cite{HS20}.
\end{remark}


\section{$\mathfrak{P}_{\mu}(\mathbf{x},\mathbf{t})$ and shifted multiset tableaux}

\subsection{Algebraic Definition of $\mathfrak{P}_{\mu}(\mathbf{x},\mathbf{t})$}

For a strict partition $\mu$ of $m$ nonzero parts, we define the weak symmetric $P$-Grothendieck polynomial in $n \geq m$ variables by:\\

$V*\mathfrak{P}_{\mu}(x_1,\ldots,x_n)=$
\begin{eqnarray*}
&\sum\limits_{\sigma \in S_n/S_{n-m}}  \mathbf{sgn}  (\sigma) \left(\prod\limits_{i} \left(\frac{x_{\sigma_i}}{1-x_{\sigma_i}}\right)^{\mu_i}\right) \left(\prod\limits_{i<j, i\leq m} x_{\sigma_i}+x_{\sigma_j}\right) \left(\prod\limits_{m<i<j} x_{\sigma_i}-x_{\sigma_j}\right)&
\end{eqnarray*}
where:
\begin{eqnarray*}
S_n/S_{n-m}=\{(\sigma_1,\ldots,\sigma_n) \in S_n:  \sigma_i<\sigma_{i+1}, \,\, \forall i: m < i <n \}
\end{eqnarray*}
We define a slight generalization of this polynomial. Suppose $\mu$ has longest part $\ell=\mu_1$. Let the weak symmetric $P$-Grothendieck polynomial in $n + \ell$ variables be defined by:\\

$V*\mathfrak{P}_{\mu}(x_1,\ldots,x_n,t_1,\ldots,t_{\ell})=$
\begin{eqnarray*}
&\sum\limits_{\sigma \in S_n/S_{n-m}}  \mathbf{sgn}  (\sigma) \left(\prod\limits_{i} \left(\frac{x_{\sigma_i}}{1-t_{\ell}x_{\sigma_i}}\right)\cdots \left(\frac{x_{\sigma_i}}{1-t_{\ell-\mu_i+1}x_{\sigma_i}}\right) \right) \left(\prod\limits_{i<j, i\leq m} x_{\sigma_i}+x_{\sigma_j}\right) \left(\prod\limits_{m<i<j} x_{\sigma_i}-x_{\sigma_j}\right)&
\end{eqnarray*}
Clearly $\mathfrak{P}_{\mu}(x_1,\ldots,x_n,1,\ldots,1)=\mathfrak{P}_{\mu}(x_1,\ldots,x_n)$ whereas $\mathfrak{P}_{\mu}(x_1,\ldots,x_n,0,\ldots,0)=P_{\mu}(x_1,\ldots,x_n)$, the $P$-Schur polynomial (see for instance III.8 of \cite{MA79}). Note that the coefficient of $ t_1^{T_1} \cdots t_{\ell}^{T_{\ell}}$ in $V*\mathfrak{P}_{\mu}(x_1,\ldots,x_n,t_1,\ldots,t_{\ell})$ is given by:

\begin{eqnarray*}
\sum_{\sigma \in S_n/S_{n-m}}  \mathbf{sgn}  (\sigma)h_{T_{\ell}}(x_{\sigma_1},\ldots, x_{\sigma_{c_{\ell}}}) \cdots h_{T_1}(x_{\sigma_1},\ldots, x_{\sigma_{c_1}}) \\ *x_{\sigma_1}^{\mu_1} \cdots x_{\sigma_m}^{\mu_m} \left(\prod\limits_{i<j, i\leq m} x_{\sigma_i}+x_{\sigma_j}\right) \left(\prod\limits_{m<i<j} x_{\sigma_i}-x_{\sigma_j}\right)
\end{eqnarray*}
where $(c_{\ell},\ldots,c_1)=\mu'$. We can create each permutation in $S_n/S_{n-m}$ by first selecting $m$ variables and then permuting them.  Now letting:
\begin{eqnarray*}
S_n/(S_{n-m} \times S_m)=\{(\sigma_1,\ldots,\sigma_n) \in S_n:  \sigma_i<\sigma_{i+1}, \,\, \forall i \neq m\}
\end{eqnarray*}
this yields:
\begin{eqnarray*}
\sum\limits_{\tau \in S_n/(S_{n-m} \times S_m)}  \mathbf{sgn}  (\tau) \sum\limits_{\sigma \in S_m(\tau_1,\ldots,\tau_m)}  \mathbf{sgn}  (\sigma)h_{T_{\ell}}(x_{\sigma(\tau_1)},\ldots, x_{\sigma(\tau_{c_{\ell}})}) \cdots h_{T_1}(x_{\sigma(\tau_1)},\ldots, x_{\sigma(\tau_{c_{1}})}) \\ *x_{\sigma(\tau_1)}^{\mu_1} \cdots x_{\sigma(\tau_m)}^{\mu_m} \left(\prod\limits_{i<j \leq m} x_{\sigma(\tau_i)}+x_{\sigma(\tau_j)}\right) \left(\prod\limits_{i\leq m, j>m} x_{\sigma(\tau_i)}+x_{\tau_j}\right) \left(\prod\limits_{m<i<j}x_{\tau_i}-x_{\tau_j}\right)
\end{eqnarray*}
The last three products are constant over the choice of $\sigma$ so we may apply Lemma \ref{hmult} since again $n \geq c_{\ell} \geq \cdots \geq c_1$. We are left with:
\begin{eqnarray*}
\sum\limits_{\tau \in S_n/(S_{n-m} \times S_m)}  \mathbf{sgn}  (\tau) \sum_{\lambda=\lambda^{\ell} \supseteq \cdots \supseteq \lambda^1 \supseteq \lambda^0=\mu} \,\,\,
\sum\limits_{\sigma \in S_m(\tau_1,\ldots,\tau_m)}  \mathbf{sgn}  (\sigma) x_{\sigma(\tau_1)}^{\lambda_1} \cdots x_{\sigma(\tau_m)}^{\lambda_m} \\ * \left(\prod\limits_{i<j \leq m} x_{\sigma(\tau_i)}+x_{\sigma(\tau_j)}\right) \left(\prod\limits_{i\leq m, j>m} x_{\sigma(\tau_i)}+x_{\tau_j}\right) \left(\prod\limits_{m<i<j}x_{\tau_i}-x_{\tau_j}\right)
\end{eqnarray*}
where the middle sum is over all good $\mathbf{T}$-extensions. Reverting to a sum over a single set of permutations this becomes:
\begin{eqnarray*}
\sum_{\lambda=\lambda^{\ell} \supseteq \cdots \supseteq \lambda^1 \supseteq \lambda^0=\mu} \,\,\,
\sum_{\sigma \in S_n/S_{n-m}}  \mathbf{sgn}  (\sigma) x_{\sigma_1}^{\lambda_1} \cdots x_{\sigma_m}^{\lambda_m} \left(\prod\limits_{i<j, i\leq m} x_{\sigma_i}+x_{\sigma_j}\right) \left(\prod\limits_{m<i<j} x_{\sigma_i}-x_{\sigma_j}\right)
\end{eqnarray*}

Fix some $\lambda \supseteq \mu$ and let $\rho=\lambda-\delta$ where $\delta=(m-1,\ldots,0)$ and consider the set of semistandard Young tableaux of shape $\rho/(\mu-\delta)$ and weight $T_1,\ldots,T_{\ell}$ such that every entry $i$ occurs on or above row $c_i$.  We claim that each good $\mathbf{T}$-extension $\lambda=\lambda^{\ell} \supseteq \cdots \supseteq \lambda^1 \supseteq \lambda^0=\mu$ encodes such a tableau in a bijective fashion.  Indeed, the bijection is given by first considering the tableau of shape $\lambda/\mu$ where each strip $\lambda^i/\lambda^{i-1}$ is filled with $i$s for each $i \in [1, \ell]$ and then,  moving all entries in row $j$ to the left by $m-j$ positions  for each $j \in [1,m]$ (and eliminating excess boxes).

 Since the inner sum in the equation above is precisely the classical definition of $V*P_{\lambda}(x_1,\ldots,x_n)$ where $P_{\lambda}(x_1,\ldots,x_n)$ is the $P$-Schur function \cite{MA79}, it now follows that the expression above is the same as:

\begin{eqnarray*}
V*\sum_{\rho \supseteq (\mu-\delta)}
(N^{\mathbf{T}}_{\rho/(\mu-\delta)}) P_{\rho+\delta}(x_1,\ldots,x_n)
\end{eqnarray*}
where $N^{\mathbf{T}}_{\rho/(\mu-\delta)}$ is the number of semistandard Young tableaux of shape $\rho/(\mu-\delta)$ and weight $T_1,\ldots,T_{\ell}$ such that every entry $i$ occurs on or above row $c_i$.

\begin{dfn}
Let $\mu$ be a partition with $m$ distinct, nonzero parts and conjugate $\mu'=(c_{\ell},\ldots,c_1)$ and set $\delta=(m-1,\ldots,0)$. If $\lambda \supseteq \mu$ is a partition of $m$ distinct parts then a \emph{\textbf{shifted restricted tableau}} of shape $(\lambda-\delta)/(\mu-\delta)$ is a semistandard Young tableau of this shape using entries in the alphabet $\{1,\ldots,\ell\}$ such that each entry $i$ occurs on or above row $c_i$. We denote the set of all such tableaux by $SRT(\lambda/\mu)$. If $R \in SRT(\lambda/\mu)$ then the weight of $R$, denoted $wt(R)$ is the vector $(w_1,\ldots,w_{\ell})$ where $w_i$ is the number of $i$s that appear in $R$.
\end{dfn}

\begin{ex}\label{resexs}
Let $\lambda=(10,8,6,4)$ and $\mu =(7,5,4,2)$ so that $c_7=4$, $c_6=4$, $c_5=3$, $c_4=3$, $c_3=2$, $c_2=1$, $c_1=1$.  (Since the diagram below is shown as a shifted diagram the $c_i$ are computed by counting the length of the top left to bottom right diagonals of $\mu$.)
\begin{eqnarray*}
\ytableausetup{notabloids}
\begin{ytableau}
\cdot & \cdot & \cdot & *(green) \cdot & *(green) \cdot &*(green) \cdot &*(green) \cdot & *(green) 2 & *(green) 3 & *(green) 5\\
\none&\cdot & \cdot & *(yellow) \cdot & *(yellow) \cdot &*(yellow) \cdot & *(yellow) 3 & *(yellow) 3 & *(yellow) 6\\
\none& \none & \cdot& *(orange) \cdot & *(orange) \cdot &*(orange) \cdot & *(orange) 4 & *(orange) 7 \\
\none & \none & \none & *(red) \cdot &*(red) \cdot & *(red) 6 & *(red) 7 \\
\end{ytableau}
\end{eqnarray*}
Since all 1s and 2s lie in the green all 3s lie in the green or yellow, all 4s and all 5s lie in the orange, yellow, or green, and all 6s and 7s lie in the red, orange, yellow, or green, this is an element of $SRT(\lambda/\mu)$. It has weight $(0,1,3,1,1,2,2)$.
\end{ex}

The statement before the definition now becomes:

\begin{thm}\label{Pex}
Let $\mu$ be a partition with $m \leq n$ distinct nonzero parts and longest part equal to $\ell$ and set $\mathbf{t}=(t_1,\ldots , t_{\ell})$, $\mathbf{x}=(x_1,\ldots,x_n)$ then

\begin{eqnarray*}
\mathfrak{P}_{\mu}(x_1,\ldots,x_n,t_1,\ldots,t_{\ell})=
\sum_{\lambda \supseteq (\mu-\delta)} \sum_{R \in SRT((\lambda+\delta)/\mu)}
\mathbf{t}^{wt(R)} P_{\lambda+\delta}(\mathbf{x})
\end{eqnarray*}
\end{thm}

\begin{remark}
Note that $RT(\lambda/\mu)$ is \emph{not} the 
 $SRT((\lambda+\delta)/(\mu+\delta))$ since in the first case we use the constants $(c_{\ell},\ldots,c_1)=\mu'$ and the alphabet $\{1,\ldots,\ell\}$ and in the second we would use the constants $(d_{\ell+m-1},\ldots,d_1)=(\mu+\delta)'$ and the alphabet $\{1,\ldots,\ell+m-1\}$.
\end{remark}

\subsection{Shifted shape multiset tableaux}

In this section we will use the following ordered entries to fill tableaux: $S'=\{1'< 1< 2' < 2 < 3' < \cdots\}$. We use the following notation. Let $a,z \in S'$

\begin{itemize}
\item{ $a <_u z$ means $a<z$ or else $a=z$ and they are unprimed.}
\item{$a <_p z$ means $a<z$ or else $a=z$ and they are primed.}
\item{ $a >_u z$ means $a>z$ or else $a=z$ and they are unprimed.}
\item{$a >_p z$ means $a>z$ or else $a=z$ and they are primed.}
\end{itemize}
\begin{dfn}
Given a partition with distinct parts, $\mu=(\mu_1,\ldots,\mu_{m})$, a \textbf{\emph{signed shifted multiset tableau}} of shape $\mu$, or element of $SMT(\mu)$, is an arrangement of boxes with $\mu_i$ adjacent boxes in row $i$ for each $i$ and where the rows are situated such that the leftmost box of row $i$ lies one column to the left of the leftmost box of row $i+1$, along with a filling of said boxes with the following properties.
\begin{enumerate}
\item{Each box contains a nonempty multiset of the numbers $\{1',1,2',2,3',\ldots\}$ such that the multiplicity of each primed number is $0$ or $1$.}
\item{Suppose entry $z$ lies in a box directly to the right of box $b$. Then for \emph{all} $a \in b$ we have $a <_u z$.}
\item{Suppose entry $z$ lies in a box directly below box $b$. Then for \emph{some} $a \in b$ we have $a <_p z$.}
\end{enumerate}
If, in addition the smallest entry in each row is not primed we call such a tableau simply a \textbf{\emph{shifted multiset tableau}} of shape $\mu$ or an element of $SMT^0(\mu)$.\footnote{Compare to the definitions of \emph{weak set-valued shifted tableaux} in ~\cite{HKPWZZ17} and \emph{set-valued shifted tableaux} in ~\cite{IN13}.}
\end{dfn}
The weight of a (signed) shifted multiset tableau is the vector $(w_1,w_2, \ldots)$ where $w_i$ is the total number of $i$s or $i'$s appearing in the tableau. We label the top left to bottom right diagonals from left to right by $\{\ell, \ell-1,\ldots,2,1\}$ where $\ell=\mu_1$. By box $d_{ij}$ we refer to the box that is in the $i^{th}$ row (from top to bottom) of diagonal $j$. Define the diagonal weight of a shifted multiset tableau, $dw$, to be the vector $(T_1,\ldots,T_{\ell})$ where $T_j$ is the difference between the number of entries in diagonal $j$ and the number of boxes in diagonal $j$. Let, $|d_{ij}|$ mean the total number of entries in box $d_{ij}$ and $|d_{ij}(x)|$ refer, more specifically, to the number of entries in box $d_{ij}$ in tableau $x$. The convention is $|d_{ij}|=0$ if $d_{ij}$ describes a position not in the tableau.

\begin{ex}
Let $\mu=(5,4,2)$. Then
\begin{eqnarray*}
\begin{ytableau}
1&1113&3&4'45&7'7\\ \none &22&4'4&5'6'&7'\\ \none & \none & 45'&55\\
\end{ytableau}
\end{eqnarray*}
is an element $P \in SMT(\mu)$ with $wt(P)=(4,2,2,5,5,1,3)$ and $dw(P)=(1,2,1,5,2)$.
\end{ex}

\begin{dfn}
An element of ${SMT}(\mu)$ with diagonal weight $(0,\ldots,0)$ is called a \textbf{\emph{signed shifted semistandard tableau}} of shape $\mu$, or element of ${SST}(\mu)$. An element of ${SMT^0}(\mu)$ with diagonal weight $(0,\ldots,0)$ is called a \textbf{\emph{shifted semistandard tableau}} of shape $\mu$, or element of ${SST^0}(\mu)$.
\end{dfn}

\begin{remark}
Note that ${SST^0}(\mu)$, which is the subset of ${SST}(\mu)$ with no primes in the leftmost diagonal, agrees with the usual definition of shifted semistandard tableau (e.g., ~\cite{Ser09}) and is therefore the generating set for the $P$-Schur function $P_{\mu}$. Moreover, if $m$ is the number of parts of $\mu$, it is not difficult to see that number of elements with a given weight and column weight in ${SST}(\mu)$ is equal to $2^m$ times the number of elements in ${SST^0}(\mu)$ with that weight and column weight. The relationship of  ${SMT}(\mu)$ to ${SMT^0}(\mu)$ is the same.
\end{remark}

\begin{dfn}
A \textbf{\emph{maximal shifted multiset tableau}} of shape $\mu$, or element of $\overline{SMT}(\mu)$ is an element of ${SMT}(\mu)$ with the following properties:

\begin{enumerate}
\item{Each box $d_{ij}$ may only contain $i$s.}
\item{For each $i \geq 1$ and $ k \geq 0$ we have $\sum\limits_{1 \leq j \leq k} |d_{(i+1)j}|-|d_{i(j-1)}| \leq 0$}
\end{enumerate}

\begin{ex} \label{maxexs}
Let $\mu=(4,3,3,1)$. Then
\begin{eqnarray*}
\begin{ytableau}
1&1&11&1&11&11&1\\ \none &2&22&2&2&222\\ \none & \none & 33&3&3&33\\ \none & \none & \none & 44&44
\end{ytableau}
\end{eqnarray*}
is an element $P \in \overline{MT}(\mu)$ with $wt(P)=(7,6,5,4)$ and $cw(P)=(1,3,4,2)$.
\end{ex}

\end{dfn}

\begin{prop}
There is a bijection from the subset of $\overline{SMT}(\mu)$ with weight $\lambda$ and diagonal weight $\mathbf{T}$ to the subset of $SRT(\lambda/\mu)$ with weight $\mathbf{T}$.
\end{prop}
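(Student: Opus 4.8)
The plan is to adapt the proof of the earlier unshifted Proposition (the bijection between $\overline{MT}(\mu)$ and $RT(\lambda/\mu)$) to the shifted setting, replacing rows of boxes by $\setminus$-direction diagonals and the column/row bookkeeping by the corresponding diagonal bookkeeping. Concretely, let $X$ be the subset of $SMT(\mu)$ with weight $\lambda$ and diagonal weight $\mathbf{T}$ that satisfy property (1) of $\overline{SMT}(\mu)$ (each box $d_{ij}$ contains only $i$s), and let $Y$ be the set of fillings of the shifted skew shape $(\lambda-\delta)/(\mu-\delta)$ by entries of $\{1,\ldots,\ell\}$ that are weakly increasing along rows and in which row $i$ uses only entries $>\ell-\mu_i$, equivalently every entry $i$ lies on or above row $c_i$. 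I would define $x\mapsto y$ by declaring that row $i$ of $y$ contains exactly $|d_{ij}(x)|-1$ copies of $j$ for each $j$; since a box of $SMT(\mu)$ is nonempty this difference is $\geq 0$, and the diagonal weight $T_j=\sum_i(|d_{ij}|-1)$ records exactly the number of $j$'s in $y$, so this is a weight- and $\mathbf{T}$-preserving bijection $X\to Y$ once one checks the shape of $y$ is $(\lambda-\delta)/(\mu-\delta)$ (the number of boxes in row $i$ of $y$ is $\sum_j(|d_{ij}|-1)=\lambda_i-\mu_i$, and the leftmost filled box in row $i$ sits in the column governed by $\mu_i-(i-1)$, which is the shift by $\delta$).

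The heart of the argument is to show that under $x\mapsto y$, property (2) of $\overline{SMT}(\mu)$ — namely $\sum_{1\leq j\leq k}(|d_{(i+1)j}|-|d_{i(j-1)}|)\leq 0$ for all $i\geq 1$ and $k\geq 0$ — translates precisely into the columns of $y$ being strictly decreasing down rows, which is exactly the condition that $y\in SRT(\lambda/\mu)$. I would argue exactly as in the unshifted proof: if for some $i$ and some minimal $k$ the partial sum exceeds $0$, then row $i+1$ of $y$ acquires an entry $k$ sitting (in the same column of the skew diagram) above an entry $k'$ of row $i$ with $k'\geq k$, violating strict column-decrease; conversely a $k$ in row $i+1$ lying above some $k'\geq k$ in row $i$ forces such a partial sum to exceed $0$. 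The one place where primes and the shifted geometry could interfere is the alignment of columns between consecutive rows of $y$: because rows of a shifted multiset tableau are offset by one box and the $\delta$-shift exactly undoes this offset, the box $d_{(i+1)j}$ of $x$ and the box $d_{ij}$ of $x$ — which are in the same diagonal — land in adjacent rows but the correct relative columns of $y$, so the counting "$|d_{(i+1)j}|-|d_{i(j-1)}|$" is the natural one. I expect verifying this alignment carefully (and confirming that property (1) of $SMT$, the strictly-increasing-down condition "for some $a\in b$, $a<_p z$", combined with property (1) of $\overline{SMT}$, forces the entries of a fixed diagonal to be $1,2,3,\ldots$ reading down, so that the diagonal label $j$ really is well-defined on $X$) to be the main obstacle; everything else is a transcription of the unshifted argument.

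Finally, since the elements of $Y$ that are additionally strictly decreasing down columns are by definition exactly the elements of $SRT(\lambda/\mu)$ of weight $\mathbf{T}$, restricting the bijection $X\to Y$ to those $x$ that also satisfy property (2) of $\overline{SMT}(\mu)$ yields the claimed bijection from the subset of $\overline{SMT}(\mu)$ with weight $\lambda$ and diagonal weight $\mathbf{T}$ onto the subset of $SRT(\lambda/\mu)$ with weight $\mathbf{T}$. I would also remark in passing that this, combined with Theorem \ref{Pex}, immediately gives a $P$-Schur expansion of $\mathfrak{P}_\mu(\mathbf{x},\mathbf{t})$ indexed by maximal shifted multiset tableaux, the shifted analogue of Corollary \ref{hwns}.
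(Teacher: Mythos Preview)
Your proposal is correct and follows essentially the same approach as the paper: define $X$ as those $SMT(\mu)$ of the given weight and diagonal weight satisfying property~(1), define $Y$ as row-weakly-increasing fillings of $(\lambda-\delta)/(\mu-\delta)$ with the row-bound condition, send $x\mapsto y$ by placing $|d_{ij}(x)|-1$ copies of $j$ in row $i$, and then argue that property~(2) corresponds exactly to the column-strict condition. Your additional remarks on the column alignment under the $\delta$-shift and on why no primes appear in $X$ are welcome clarifications, but the underlying argument is the same as the paper's.
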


\begin{proof}
Let $X$ be the subset of elements of $SMT(\mu)$ with weight $\lambda$ and diagonal weight $\mathbf{T}$ that satisfy property (1) above. Let $Y$ be the set of weakly increasing along rows fillings (one entry per box) of shape $(\lambda-\delta)/(\mu-\delta)$ and weight $\mathbf{T}$ such that every entry $i$ occurs on or above row $c_i$ (equivalently, row $i$ only contains entries greater than $\ell-\mu_i$). The map $x \rightarrow y$ where $y$ is defined by the property that for each $(i,j)$, row $i$ of $y$ contains exactly $|d_{ij}(x)|-1$ copies of $j$ is a bijection from $X$ to $Y$. Moreover if $x \rightarrow y$ then $x$ satisfies property (2) above if and only if the columns of $y$ are strictly decreasing down rows: Indeed, if there is some $i$ and some $k$ such that $\sum\limits_{1 \leq j \leq k} |d_{(i+1)j}|-|d_{i(j-1)}| >0$ then for the minimal such $k$, the rightmost $k$ appearing in row $i+1$ will be in diagonal $k+1-\sum\limits_{1 \leq j \leq k} |d_{(i+1)j}|$ whereas the rightmost $k-1$ (or rightmost instance of the greatest number less than $k-1$ if this row contains no $k-1$) appearing in row $i$ of $y$ will lie in column $k-\sum\limits_{1 \leq j \leq k} |d_{i(j-1)}|$, that is, strictly to the left of the aforementioned $k$. Hence said $k$ must lie below a number greater than or equal to $k$.

On the other hand, if in some diagonal, say $p$, row $i+1$ of $y$ contains a $k$ and diagonal $p-1$ row $i$ contains a $k'$  with $k'\geq k$ then for the minimal such $p$, we have $k+1-\sum\limits_{1 \leq j \leq k} |d_{(i+1)j}| \leq p$  and $k-\sum\limits_{1 \leq j \leq k} |d_{i(j-1)}| \geq p$ so that we have $\sum\limits_{1 \leq j \leq k} |d_{(i+1)j}|-|d_{i(j-1)}| >0$. Since the elements of $Y$ that are strictly decreasing down columns are exactly the elements of $SRT(\lambda/\mu)$ with weight $\mathbf{T}$, the map restricted to the elements of $X$ that satisfy property (2) gives the desired bijection.
\end{proof}

\begin{ex}
The tableaux of examples \ref{resexs} and \ref{maxexs} correspond under this bijection.
\end{ex}

\begin{cor}\label{hws}
Let $\mu$ be a partition with $m \leq n$ distinct nonzero parts and longest part equal to $\ell$ and set $\mathbf{t}=(t_1,\ldots , t_{\ell})$, $\mathbf{x}=(x_1,\ldots,x_n)$ then
\begin{eqnarray*}
\mathfrak{P}_{\mu}(x_1,\ldots,x_n,t_1,\ldots,t_{\ell})=
\sum_{Q \in \overline{SMT}(\mu)}
\mathbf{t}^{dw(Q)}P_{wt(Q)}(\mathbf{x})
\end{eqnarray*}
\end{cor}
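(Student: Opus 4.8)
The plan is to prove Corollary \ref{hws} by chaining together Theorem \ref{Pex} with the preceding proposition, exactly in the spirit of the proof of Corollary \ref{hwns} in the $\mathfrak{J}$ case. First I would start from the right-hand side of Theorem \ref{Pex},
\begin{eqnarray*}
\mathfrak{P}_{\mu}(x_1,\ldots,x_n,t_1,\ldots,t_{\ell})=
\sum_{\lambda \supseteq (\mu-\delta)} \sum_{R \in SRT((\lambda+\delta)/\mu)}
\mathbf{t}^{wt(R)} P_{\lambda+\delta}(\mathbf{x}),
\end{eqnarray*}
and reindex the outer sum over partitions $\nu=\lambda+\delta$ with $m$ distinct parts satisfying $\nu\supseteq\mu$, so that the sum reads $\sum_{\nu\supseteq\mu}\sum_{R\in SRT(\nu/\mu)}\mathbf{t}^{wt(R)}P_{\nu}(\mathbf{x})$. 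Then I would invoke the Proposition immediately preceding the corollary: for fixed $\nu\supseteq\mu$ and fixed diagonal weight $\mathbf{T}$, it gives a bijection between $\{Q\in\overline{SMT}(\mu): wt(Q)=\nu,\ dw(Q)=\mathbf{T}\}$ and $\{R\in SRT(\nu/\mu): wt(R)=\mathbf{T}\}$. Hence $\sum_{R\in SRT(\nu/\mu)}\mathbf{t}^{wt(R)}=\sum_{Q\in\overline{SMT}(\mu),\, wt(Q)=\nu}\mathbf{t}^{dw(Q)}$.

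Substituting this identity into the reindexed expression, I would get
\begin{eqnarray*}
\mathfrak{P}_{\mu}(x_1,\ldots,x_n,t_1,\ldots,t_{\ell})=
\sum_{\nu\supseteq\mu}\ \sum_{\substack{Q\in\overline{SMT}(\mu)\\ wt(Q)=\nu}}\mathbf{t}^{dw(Q)}P_{\nu}(\mathbf{x})
=\sum_{Q\in\overline{SMT}(\mu)}\mathbf{t}^{dw(Q)}P_{wt(Q)}(\mathbf{x}),
\end{eqnarray*}
where the last equality just collapses the double sum by noting that $P_{\nu}(\mathbf{x})=P_{wt(Q)}(\mathbf{x})$ whenever $wt(Q)=\nu$, and that every $Q\in\overline{SMT}(\mu)$ has some weight $\nu$ which, by property (1) and the shape constraints of a maximal shifted multiset tableau, is a partition with $m$ distinct parts containing $\mu$. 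This is precisely the claimed formula, so the proof would be complete.

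The only genuine content beyond bookkeeping is checking that the reindexing $\nu=\lambda+\delta$ is a bijection onto exactly the set of partitions with $m$ distinct parts containing $\mu$, and that the weight of any $Q\in\overline{SMT}(\mu)$ indeed lands in that set — but both follow directly from the definitions of $\overline{SMT}(\mu)$ (each $d_{ij}$ contains only $i$s, the diagonal-weight vector is finite, and the shifted shape forces the $m$ rows to have strictly decreasing lengths once property (2) is imposed). I would expect the main obstacle, such as it is, to be purely notational: keeping the shift $\delta=(m-1,\ldots,0)$ consistent between the $SRT$ indexing (which uses $(\lambda-\delta)/(\mu-\delta)$ shapes) and the $\overline{SMT}(\mu)$ weight convention, so that the two ``weight $\mathbf{T}$'' conditions in the Proposition are matched up with the coefficient of $\mathbf{t}^{\mathbf{T}}$ on each side. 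No new combinatorial construction is needed; the corollary is a formal consequence of Theorem \ref{Pex} and the Proposition.
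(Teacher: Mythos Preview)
Your proposal is correct and is precisely the intended argument: the paper states Corollary \ref{hws} without an explicit proof because it is meant to follow immediately from Theorem \ref{Pex} together with the preceding Proposition, exactly as you outline. Your reindexing $\nu=\lambda+\delta$ and the matching of $wt(R)$ with $dw(Q)$ via the bijection are the only steps needed, and they are handled correctly.
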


\subsection{Combinatorial Definition of $\mathfrak{P}_{\mu}(\mathbf{x},\mathbf{t})$}

In this section we will give an equivalent combinatorial definition of $\mathfrak{P}_{\mu}$. We will need a certain column insertion algorithm. In the below, we describe how to \textbf{insert} and \textbf{reverse insert} an element into a column.

Let $K$ be a valid column (each box of $K$ contains exactly one element from $S'$ and whenever $a$ lies above $z$ in $K$ we have $a <_p z$). Now let $a \in S'$. We \textbf{insert} $a$ into $K$, denoted $a \hookrightarrow K$ as follows:
Let $\hat{a}$ denote the uppermost entry in $K$ such that $a<_u \hat{a}$. If $\hat{a}$ exists, replace $\hat{a}$ with $a$ and bump out $\hat{a}$. Otherwise, append $a$ to the bottom of $K$. The result is recorded as the pair $(K',\hat{a})$ if the second of this pair exists and just $K'$ otherwise.
On the other hand if $z \in S'$ is any element such that $z >_u a$ for some $a \in K$ then we define \textbf{reverse insertion} of $z$ into $K$ as follows: Let $\hat{z}$ denote the bottommost entry in $K$ such that $z>_u \hat{z}$. Replace $\hat{z}$ with $z$ and bump out $\hat{z}$. The result is recorded as the pair $(\hat{z},K')$.

Notice the basic properties:

\begin{enumerate}
\item{If $a \hookrightarrow K=K'$ then $K'$ is a valid column. }
\item{if $a \hookrightarrow K= (K',\hat{a})$ then $K'$ is a valid column.}
\item{If $K \hookleftarrow z=(\hat{z},K')$ then $K'$ is a valid column.}
\item{If $a <_u z$ then either
\begin{itemize}
\item{$z \hookrightarrow K=K'$ and $a \hookrightarrow K' =({K}^{\prime\prime},\hat{a})$.}
\item{$z \hookrightarrow K=(K',\hat{z})$ and $a \hookrightarrow K' =({K}^{\prime\prime},\hat{a})$ where $\hat{a} <_u \hat{z}$.}
\end{itemize}}
\item{If $a <_u z$ and
$K \hookleftarrow a=(\hat{a},K')$ and $K' \hookleftarrow z=(\hat{z},{K}^{\prime\prime})$ then $\hat{a} <_u \hat{z}$.}
\end{enumerate}

Now, fix a partition $\mu$ with $m$ distinct nontrivial parts and with conjugate $\mu'=(c_{\ell},\ldots,c_1)$. We will refer to both columns and diagonals. Both are labeled in decreasing order from left to right starting on $\ell$.

\begin{prop} \label{bijs}
There is a bijection $SMT(\mu) \rightarrow \bigcup\limits_{\lambda \supseteq \mu} SST(\lambda) \times SRT(\lambda/\mu)$, such that if $P \rightarrow (Q,R)$ then:
\begin{enumerate}
\item{$wt(P)=wt(Q)$.}
\item{$dw(P)=wt(R)$.}
\end{enumerate}

\end{prop}
\begin{proof}[Proof preliminaries]\renewcommand{\qedsymbol}{}
We define the set $SMT_{k}(\lambda)$ to be the subset of elements of $SMT(\lambda)$ that have only single entries in diagonals $k-1,\ldots,1,0,-1,\ldots$. Define the set $SRT_{k}(\lambda/\mu)$ to be the subset of elements of $SRT(\lambda/\mu)$ that have only entries from $\{1,2,\ldots,k-1\}$. Given a pair $(Q,R) \in SMT_k(\lambda) \times SRT_k(\lambda/\mu)$ define the weight and diagonal weight of this pair as $wt(Q,R)=wt(Q)$ and $dw(Q,R)=dw(Q)+wt(R)$. To  prove the proposition it suffices to find a weight and diagonal weight preserving bijection for each $k$  from $\bigcup\limits_{\lambda \supseteq \mu} SMT_k(\lambda) \times SRT_k(\lambda/\mu)$ to $\bigcup\limits_{\lambda \supseteq \mu} SMT_{k+1}(\lambda) \times SRT_{k+1}(\lambda/\mu)$ (and then compose $\Phi(P)=\Phi_{\ell} \circ \cdots \circ \Phi_1 (P,Q_0)$ where $Q_0$ is the empty tableau of shape $\mu/\mu$). To do the former, it is enough to find a weight preserving bijection $\phi_k: SMT_k(\lambda) \rightarrow \bigcup\limits_{\nu \supseteq \lambda} SMT_{k+1}(\nu)$ where the union is over all $\nu$ such $\nu/\lambda$ is a horizontal strip with no box below row $c_{k}$ (recall that the value of $c_k$ is defined by the shape $\mu$ although it is easily shown that if  we label the leftmost column of $\lambda$ by $\ell=\mu_1$ then diagonal $k$ of $\mu$ has the same length as diagonal $k$ of $\lambda$).   We then set $\Phi_k(Q,R)=(\phi_k(Q),R')$ where $R'$ is obtained by appending boxes to $R$ until its outer  shape is the same as $\phi_k(Q)$  and filling each appended box with the entry $k$.

Before we can define $\phi_k$ we need to introduce the following map: Let $T \in SMT_{k}(\lambda)$. Define $\textbf{out}(T)$
as follows: First, in each box of diagonal $k$ circle (one of) the minimum entry(s) from that box. Now find (one of) the largest noncircled entry(s) in diagonal $k$ and remove it and \textbf{insert} it into the undercolumn to the right of the column from which it was removed (where the \emph{undercolumn} denotes the part of the column that lies below a circled entry, or, if there is no circled entry in the column, the entire column). After this, each time an element is bumped, \textbf{insert} it into the next undercolumn to the right until some entry is eventually appended to an undercolumn. Note the following properties of $\textbf{out}$.

\begin{enumerate}
\item{The path of positions where an element is bumped/appended moves weakly up as we move to the right.}
\item{Properties (1), (2), and (3) in the definition of shifted multiset tableaux are preserved under $\textbf{out}$.}
\item{If $\textbf{out}(T)$ and $\textbf{out}(\textbf{out}(T))$ are both defined then the box that $\textbf{out}$ appends to $\textbf{out}(T)$ lies strictly to the right of the box that $\textbf{out}$ appends to $T$.}
\end{enumerate}

\ytableausetup{boxsize=24pt}
\begin{ex} \label{outexs}
Suppose that diagonal $k$ is the second diagonal from the left.  Each $\longrightarrow$ represents an application of $\textbf{out}$.
\begin{eqnarray*}
\begin{ytableau}
11&{\Large{\textcir{\normalsize{2}}}}'2&2&2&4&5'\\ \none & 2& {\Large{\textcir{\normalsize{3}}}}'3&4'&5'\\ \none & \none & 34'&{\Large{\textcir{\normalsize{4}}}}5'\textcolor{red}{5}\\
\end{ytableau}\longrightarrow
\begin{ytableau}
11&{\Large{\textcir{\normalsize{2}}}}'2&2&2&4&5'\\ \none & 2& {\Large{\textcir{\normalsize{3}}}}'3&4'&5'\\ \none & \none & 34'&{\Large{\textcir{\normalsize{4}}}}\textcolor{red}{5'}&*(green)5\\
\end{ytableau}\longrightarrow \\
\begin{ytableau}
11&{\Large{\textcir{\normalsize{2}}}}'2&2&2&4&5'\\ \none & 2& {\Large{\textcir{\normalsize{3}}}}'\textcolor{red}{3}&4'&5'&*(green)5\\ \none & \none & 34'&{\Large{
{\normalsize{4}}}}&*(green)5'\\
\end{ytableau}\longrightarrow
\begin{ytableau}
11&{\Large{\textcir{\normalsize{2}}}}'\textcolor{red}{2}&2&2&4'&4&*(green)5'\\ \none & 2& {\Large{\textcir{\normalsize{3}}}}'&3&5'&*(green)5\\ \none & \none & 34'&{\Large{\textcir{\normalsize{4}}}}&*(green)5'\\
\end{ytableau}\longrightarrow \\
\begin{ytableau}
11&{\Large{\textcir{\normalsize{2}}}}'&2&2&2&4'&*(green)4&*(green)5'\\ \none & 2& {\Large{\textcir{\normalsize{3}}}}'&3&5'&*(green)5\\ \none & \none & 34'&{\Large{\textcir{\normalsize{4}}}}&*(green)5'\\
\end{ytableau}
\end{eqnarray*}
Uncircled numbers being removed are shown in red, and the boxes being added appear in green.
\end{ex}

We will also need a map called $\textbf{in}_b$. Let $T \in SMT_{k}(\nu)$ for some $\nu$ such that $\nu/\lambda$ is a horizontal strip with no box below row $c_{k}$ and suppose $b$ is some corner box of $T$ that lies on or above row $c_k$. Define $\textbf{in}_b(T)$ as follows: First, in each box of diagonal $k$ circle (one of) the minimum entry(s) from that box. Now remove the entry from box $b$. If this entry is less than the circled entry in the column to the left, or both are equal and primed, or there is no such circled element, \textbf{reverse insert} it into the undercolumn of the column to the left. After this, each time an element is bumped that is less than the circled entry in the column to its left or equal to it and primed, \textbf{reverse insert} it into the undercolumn of the column to the left. When an element is bumped that is greater than the circled entry in the column to its left or equal to it and unprimed, add it to the box containing this circled element. Note the following properties of $\textbf{in}_b$.

\begin{enumerate}
\item{The path of positions where an element is bumped/added moves weakly down as we move to the left.}
\item{Properties (2), and (3) in the definition of shifted multiset tableaux are preserved under $\textbf{in}_b$. Property (1) is satisfied unless $\textbf{in}_b$ adds a primed entry to a box already containing a the same noncircled primed entry.}
\item{If $b'$ lies to the left of $b$ and if $\textbf{in}_b(T)$ and $\textbf{in}_{b'}(\textbf{in}_b(T))$ are both defined then the element that $\textbf{in}_{b'}$ adds to diagonal $k$ of $\textbf{in}_b(T)$ is greater than, or equal to and unprimed, the element $\textbf{in}_b$ adds to diagonal $k$ of $T$. }
\end{enumerate}

Moreover, $\textbf{out}$ and $\textbf{in}_b$ are related as follows:

\begin{enumerate}
\item{If $\textbf{out}$ appends box $b$ when applied to $T$, then $\textbf{in}_b(\textbf{out}(T))=T$.}
\item{If the element that $\textbf{in}_b$ adds to diagonal $k$ when applied to $T$ is the largest, or tied for the largest and unprimed, uncircled element on diagonal $k$ then $\textbf{in}_b(T)$ satisfies property (1) in the definition of shifted multiset tableaux (and hence is a shifted multiset tableau), and $\textbf{out}(\textbf{in}_b(T))=T$ .}
\end{enumerate}

\begin{ex} \label{uuu}
Set $k=2$. Then $\textbf{in}_{red}(\textbf{in}_{orange}(\textbf{in}_{yellow}(\textbf{in}_{green}(T))))=T'$ where $T$ is the tableau below on the left and $T'$ is the tableau below on the right.
\begin{eqnarray*}
\begin{ytableau}
11&{\Large{\textcir{\normalsize{2}}}}'&2&2&2&4'&*(yellow)4&*(green)5'\\ \none & 2& {\Large{\textcir{\normalsize{3}}}}'&3&5'&*(orange)5\\ \none & \none & 34'&{\Large{\textcir{\normalsize{4}}}}&*(red)5'\\
\end{ytableau} \longrightarrow
\begin{ytableau}
11&{\Large{\textcir{\normalsize{2}}}}'2&2&2&4&5'\\ \none & 2& {\Large{\textcir{\normalsize{3}}}}'3&4'&5'\\ \none & \none & 34'&{\Large{\textcir{\normalsize{4}}}}5'\textcolor{red}{5}\\
\end{ytableau}
\end{eqnarray*}
Note that $T$ is the last tableau of example \ref{outexs} and $T'$ is the first tableau of \ref{outexs}.
\end{ex}
\end{proof}

\begin{proof}[Conclusion of proof of Proposition \ref{bijs}]
We define $\phi_k$ simply by applying $\textbf{out}$ until diagonal $k$ only contains single entries.
\begin{enumerate}
\item{$\phi_k$ is well defined. For any tableau $T$ \emph{denote the shape of $T$ by $T^s$}.  If $T \in SMT_k(\lambda)$ then Property (3) of $\textbf{out}$ implies $\phi_k(T)^s/T^s$ is a horizontal strip and Property (1) of $\textbf{out}$ implies all of its boxes lie on or above row $c_k$. On the other hand Property (2) of $\textbf{out}$ implies that $\phi_k(T)$ is a valid shifted multiset tableau, and, by construction $\phi_k(T)$ has only single entries in diagonals $k, k-1, \ldots, 0,-1,\ldots$.}
\item{$ \phi_k$ is injective. Suppose $T \neq T' \in SMT_k(\lambda)$ with $\phi_k(T)=\phi_k(T')$ then by Property (3) of $\textbf{out}$ and construction of $\phi_k$ there is some $\nu$ and some $S \neq S' \in SMT_k(\nu)$ with $\textbf{out}(S)=\textbf{out}(S')$. But then if $b$ is the box that $\textbf{out}$ adds to $S$ or equivalently to $S'$, property (1) of how $\textbf{out}$ and $\textbf{in}_b$ are related says $S=\textbf{in}_{b}(\textbf{out}(S))=\textbf{in}_{b}(\textbf{out}(S'))=S'$.}
\item{$ \phi_k$ is surjective. Let $T \in \bigcup\limits_{\nu \supseteq \lambda} SMT_{k+1}(\nu)$ where the union is over all $\nu$ such $\nu/\lambda$ is a horizontal strip with no box below row $c_{k}$. Let $b_1,\ldots,b_r$ denote the boxes labeled from left to right of $T^s/\lambda$. Set $S=\textbf{in}_{b_1}(\cdots(\textbf{in}_{b_r}(T)\cdots)$. Property (3) of $\textbf{in}_b$ implies that for each $i$ we have that $\textbf{in}_{b_i}$ adds a an element to diagonal $k$ when applied to $\textbf{in}_{b_{i+1}}(\cdots(\textbf{in}_{b_r}(T)\cdots)$ that is the largest, or tied for largest and unprimed, noncircled element in diagonal $k$. This along with property (2) of $\textbf{in}_b$ implies $\textbf{in}_{b_{i}}(\cdots(\textbf{in}_{b_r}(T)\cdots)$ is a valid shifted multiset tableau. Moreover, property (2) of how $\textbf{out}$ and $\textbf{in}_b$ are related says that in this case $\textbf{out}( \textbf{in}_{b_{i}}(\cdots(\textbf{in}_{b_r}(T)\cdots) )=\textbf{in}_{b_{i+1}}(\cdots(\textbf{in}_{b_r}(T)\cdots)$. All together, this implies that $S$ is a valid shifted multiset tableau and that $\phi_k(S)=T$. By construction, $S$ has shape $\lambda$ and has only single entries in diagonals $ k-1, \ldots, 0,-1,\ldots$, i.e., $S \in SMT_k(\lambda)$. }
\end{enumerate}

\end{proof}

\begin{thm}\label{ms}
Let $\mu$ be a partition with $m \leq n$ distinct nonzero parts and longest part equal to $\ell$ and set $\mathbf{t}=(t_1,\ldots , t_{\ell})$, $\mathbf{x}=(x_1,\ldots,x_n)$ then

\begin{eqnarray*}
\mathfrak{P}_{\mu}(x_1,\ldots,x_n,t_1,\ldots,t_{\ell})=
\sum_{P \in SMT^0(\mu)}
\mathbf{t}^{dw(P)}\mathbf{x}^{wt(P)}
\end{eqnarray*}

\end{thm}

\begin{proof}
Let $m$ denote the number of parts of $\mu$.\\

$\mathfrak{P}_{\mu}(x_1,\ldots,x_n,t_1,\ldots,t_{\ell})$
\begin{align*}
&=&
\sum_{\lambda \supseteq (\mu-\delta)} \sum_{R \in SRT((\lambda+\delta)/\mu)}
\mathbf{t}^{wt(R)} P_{\lambda+\delta}(\mathbf{x}) && \text{Theorem \ref{Pex}}& \\
&=&
\sum_{\lambda \supseteq (\mu-\delta)} \sum_{R \in SRT((\lambda+\delta)/\mu)} \sum_{Q \in SST^0(\lambda+\delta)}
\mathbf{t}^{wt(R)}\mathbf{x}^{wt(Q)}&& \text{Def. of $P_{\lambda+\delta}$} &\\
&=&
\sum_{\lambda \supseteq (\mu-\delta)} \sum_{R \in SRT((\lambda+\delta)/\mu)} \sum_{Q \in SST(\lambda+\delta)}
(2^{-m})\mathbf{t}^{wt(R)}\mathbf{x}^{wt(Q)}&& \text{Def. of $SST^0$} &\\
&=&
\sum_{P \in SMT(\mu)}
(2^{-m}) \mathbf{t}^{dw(P)}\mathbf{x}^{wt(P)}&& \text{Prop. \ref{bijs}}& \\
&=&
\sum_{P \in SMT^0(\mu)}
\mathbf{t}^{dw(P)}\mathbf{x}^{wt(P)}&& \text{Def. of $SMT^0$} &
\end{align*}

The definition of $P_{\lambda+\delta}$ mentioned here is the combinatorial one (see the end of section III.8 of \cite{MA79}).
\end{proof}

\begin{ex}
Let us consider $\mathfrak{P}_{2,1}(x_1,x_2,t_1,t_2)$. We will compute the degree 4 part in $\mathbf{x}$ (which is the degree 1 part in $\mathbf{t}$). We have the following tableaux:

\begin{eqnarray*}
& \begin{ytableau}
11&1\\ \none & 2
\end{ytableau}\,\,\,\,\,\,
\begin{ytableau}
1&11\\ \none & 2
\end{ytableau}& \\
&\begin{ytableau}
11&2'\\ \none & 2
\end{ytableau}\,\,\,\,\,\,
\begin{ytableau}
1&1\\ \none & 22
\end{ytableau}\,\,\,\,\,\,
\begin{ytableau}
1&12'\\ \none & 2
\end{ytableau}\,\,\,\,\,\,
\begin{ytableau}
1&12\\ \none & 2
\end{ytableau}& \\
& \begin{ytableau}
1&2'\\ \none & 22
\end{ytableau}\,\,\,\,\,\,
\begin{ytableau}
1&2'2\\ \none & 2
\end{ytableau}&
\end{eqnarray*}
This yields $x_1^3x_2t_1+x_1^3x_2t_2+
2x_1^2x_2^2t_1+2x_1^2x_2^2t_2+
x_1x_2^3t_1+x_1x_2^3t_2$, which can be expressed in terms of $P$-Schur polynomials as $t_1P_{3,1}(x_1,x_2)+t_2P_{3,1}(x_1,x_2)$. Note that this is already different from example 3.3 of ~\cite{HKPWZZ17}.

\end{ex}

\begin{remark}
There exists a queer crystal structure on the set of semistandard shifted tableaux ~\cite{Hir19}. Under this structure, the highest weight elements are precisely those for which every entry on row $i$ is an (unprimed) $i$. Moreover, the bijection $\Phi$ fixes the minimum entry on each row. Thus restricting $\Phi$ gives a bijection from $SMT^0(\mu) \rightarrow \bigcup\limits_{\lambda \supseteq \mu} SST^0(\lambda) \times SRT(\lambda/\mu)$. Moreover, it is not difficult to see that this restriction of $\Phi$ has the property that whenever $\Phi(P)=(Q,R)$ then $P \in \overline{SMT}(\mu)$ if and only if $Q$ is highest weight. Thus $\Phi^{-1}$ induces a queer crystal structure on $SMT^0(\mu)$ where the highest weight elements are precisely those that lie in $\overline{SMT}(\mu)$. This crystal structure is interpreted algebraically by comparing Corollary \ref{hws} (where the sum is over highest weight elements) with Theorem \ref{ms} (where the sum is over all elements).
\end{remark}

\bibliographystyle{alpha}

\newcommand{\etalchar}[1]{$^{#1}$}

\end{document}